\newtheorem{thm}{Theorem}
\newtheorem{lem}{Lemma}
\theoremstyle{definition}
\newtheorem{defn}{Definition} 
\newtheorem{rem}{Remark}
\newtheorem{conj}{Conjecture}
\newtheorem{prob}{Problem}
\newtheorem*{example}{Example}
\newtheorem{cor}{Corollary}
\renewcommand{\Re}{\mathbb R}
\newcommand{\B}{\mathbf B}
\renewcommand{\P}{\mathcal{P}}
\newcommand{\dif}{\;\mathrm{d}}
\def\bea{\begin{eqnarray}}
\def\eea{\end{eqnarray}}
\newcommand{\F}{\mathcal{F}}
\newcommand{\K}{\mathcal{K}}
\renewcommand{\O}{\mathcal{O}}
\newcommand{\rin}{\rho^{in}}
\newcommand{\rex}{\rho^{ex}}
\DeclareMathOperator{\perim}{perim}
\DeclareMathOperator{\relint}{relint}
\DeclareMathOperator{\relbd}{relbd}
\DeclareMathOperator{\area}{area}
\DeclareMathOperator{\inter}{int}
\DeclareMathOperator{\bd}{bd}
\DeclareMathOperator{\conv}{conv}
\DeclareMathOperator{\vol}{vol}
\DeclareMathOperator{\surf}{surf}
\begin{document}
\title[Robust equilibria]{The robustness of equilibria on convex solids}
\author[G. Domokos, Z. L\'angi]{G\'abor Domokos, Zsolt L\'angi}

\address{G\'abor Domokos, Dept. of Mechanics, Materials and Structures, Budapest University of Technology,
M\H uegyetem rakpart 1-3., Budapest, Hungary, 1111}
\email{domokos@iit.bme.hu}
\address{Zsolt L\'angi, Dept. of Geometry, Budapest University of Technology,
Egry J\'ozsef u. 1., Budapest, Hungary, 1111}
\email{zlangi@math.bme.hu}

\subjclass{53A05, 53Z05}
\keywords{equilibrium, convex surface, caustic, robustness.}

\begin{abstract}
We examine the minimal magnitude of perturbations necessary to change the number $N$ of static equilibrium points
of a convex solid $K$. We call the normalized volume of the minimally necessary truncation \emph{robustness} and we seek
shapes with maximal robustness for fixed values of $N$. While the \emph{upward}
robustness (referring to the increase of $N$) of smooth, homogeneous convex solids is known to be zero, little is known about their \emph{downward}
robustness. The difficulty of the latter problem is related to the coupling (via integrals) between
the geometry of the hull $\bd K$ and the location of the center of gravity $G$.
Here we first investigate two simpler,  decoupled problems
by examining truncations of $\bd K$ with $G$ fixed, and 
displacements of $G$ with $\bd K$ fixed, leading to the concept of \em external \rm
and \em internal \rm robustness, respectively. In dimension 2, we find that for any fixed number $N=2S$,
the convex solids with both maximal external and
maximal internal robustness are regular $S$-gons. Based on this result we conjecture that regular polygons
have maximal downward robustness also in the original, coupled problem.
We also show that in the decoupled problems, $3$-dimensional regular polyhedra have maximal internal robustness, however,  only under additional constraints.
Finally, we prove results for the full problem in case of 3 dimensional solids. These results appear to explain why monostatic
pebbles (with either one stable, or one unstable point of equilibrium) are found so rarely in Nature.
\end{abstract}
\maketitle

\section{Introduction}

Ever since the work of Archimedes \cite{Archimedes}, the study of equilibrium points of a convex solid, 
with respect to its center of gravity $G$ has been a fundamental question of statics.
The number $N$ of equilibria is characteristic of the shape, it has been applied to classify turtle shells \cite{DV} and beach pebbles \cite{DSSV}.
These classifications are based on the number and type of equilibrium points of a convex, homogeneous solid; 
in 2 dimensions we have $S$ stable and $S=U$ unstable points, and it was shown \cite{DomokosRuina} that any
\emph{ equilibrium class} $\{S\}$ 
is non-empty if $S>1$. In 3 dimensions we have, in addition to the previous two types of equilibria, $H$ saddle points; 
based on the Poincar\'e-Hopf Theorem \cite{Arnold} we have $S+U-H=2$. It is known \cite{Varkonyi} that equilibrium classes $\{S,U\}$ are non-empty
for any $S,U>0$.
It is a natural question to ask how difficult it is to change the equilibrium class of a convex solid;
we call this property \emph{robustness}, and our aim is to introduce possible approaches to this concept.
The notion of robustness is physically motivated by erosion processes \cite{Bloore}, where small
amounts of material are being abraded by collisions \cite{DSV},\cite{SDWH},\cite{Krapivsky} and friction \cite{DomokosGibbons}.

The full mathematical problem can be defined by asking for the minimal normalized volume of a truncation of a convex, homogeneous solid $K$, under which $N$ will change;
we call this scaled volume the \emph{robustness} of $K$ and denote it by $\rho(K)$. (To avoid confusion, we sometimes will refer to this quantity as the \emph{full robustness}.)
Depending on the sign of the change, we may define \emph{upward} and \emph{downward} robustness.
A recent result \cite{DLS2} shows that the upward robustness of generic, smooth convex bodies
(i.e. convex Morse functions on the sphere, cf. \cite{Milnor}) is zero.
This phenomenon is related to the existence of additional numbers of equilibria on finely discretized curves and surfaces \cite{DLS}  
and is also distantly related to Zamfirescu's result \cite{Zamfirescu} on the existence of infinitely many equilibria on the boundary of a typical (and thus neither smooth nor polyhedral) convex body.
Nevertheless, the downward robustness (i.e. the relative volume of the smallest trunction of a smooth convex body decreasing $N$)
is apparently not zero. We call the supremum of the downward robustness in any equilibrium class the \emph{robustness
of the equilibrium class} and denote it by $\rho_S$ and $\rho_{S,U}$ in 2 and 3 dimensions, respectively. Finding
the robustness of an arbitrary equilibrium class appears to be a nontrivial problem. Beyond being mathematically challenging, it also looks
rather interesting from the point of view of natural abrasion processes. Our goal in this paper is to deliver some partial results
which may serve as a basis for the intuition about the solution of the full problem.

The main difficulty of the latter lies in the nontrivial coupling via integrals between the hull $\bd K$ and the center of gravity $G$. In the
first part of this paper we solve two simpler, decoupled problems. In the first case
we seek \emph{internal robustness}, defined as the minimal (normalized) distance necessary to move $G$ at fixed $\bd K$ leading to a change in $N$.
The concept of internal robustness is phyiscally motivated by material inhomogeneities. From
the mathematical point of view, this problem is closely related to the geometry of caustics \cite{Poston},
and has attracted recent interest in the context of inhomogeneous polyhedra (cf. \cite{DawsonFinbow} and \cite{Heppes}). Here we do not distinguish
between upward/downward robustness, rather we compute their minimum.
The other decoupled problem leads to the concept of \emph{external robustness}, defined as the minimal (scaled) truncation of $K$ at fixed $G$, leading to a change in $N$.
Similarly to the full problem, (in fact, as a consequence of the same theorem \cite{DLS2}) here again the upward robustness is zero.
We show that in the plane, both decoupled problems lead to the same result: for any fixed $N=2S$, the convex shapes with maximal internal \emph{or} external (downward) robustness
are regular $S$-gons, thus we determine the internal and external robustness
for all planar equilibrium classes. This result suggests that regular polygons may have maximal robustness in the original, full problem as well. In the case of internal robustness,
we show also that platonic solids have maximal robustness, however, only under an additional constraint. 
After exploring the decoupled problems we investigate the full (downward) robustness of some selected equilibrium classes in 3 dimensions,
in particular, we show that if $S,U <3$ then $\rho_{S,U}=1$. We also explore the \emph{partial robustness} of convex 3D solids,
measuring the difficulty to either reduce $S$ or $U$. Our results offer one plausible  explanation to the geological puzzle, why
monostatic pebbles (with either $S=1$ or $U=1$) are found so extremely rarely in Nature.

After introducing basic notions and notations in Section~\ref{sec:notations},
we investigate external and internal robustness in the plane in Sections~\ref{sec:external}
and \ref{sec:internal}, respectively. Subsequently, we discuss internal robustness
in the $3$-dimensional space in Section~\ref{sec:internal_3D} and explore
the robustness of some selected equilibrium classes in Section \ref{sec:full}.
Finally, in Section~\ref{sec:summary} we make additional remarks about smoothness and structural stability and
use the previous results to formulate conjectures about the full (downward) robustness
of equilibrium classes in 3 dimensions.

\section{Basic notations}\label{sec:notations}

In this paper, we deal with convex bodies in the Euclidean spaces $\Re^2$ and $\Re^3$, of dimension $2$ or $3$, respectively, where, by a convex body, we mean a compact convex set with nonempty interior.
For a point $p$, we let $|p|$ be the Euclidean norm of $p$, and denote the Euclidean unit ball of
the ambient space by $\B$.

We distinguish three subclasses of convex bodies: by $\P_n$, $\O_n$ and $\K_n$, we denote the families of
$n$-dimensional convex polytopes, convex bodies with smooth ($C^\infty$-class) boundary, and convex bodies with piecewise smooth boundary, respectively.

Let $K \in \K_2$ be a convex body, and $p \in \inter K$.
We say that $q \in \bd K$ is an \emph{equilibrium point} of $K$ with
respect to $p$, if the line passing through $q$ and perpendicular to $q-p$,
supports $K$.
Clearly, if $\bd K$ is smooth at $q$, then this condition is equivalent to saying that $q$ is a critical point
of the Euclidean distance function $z \mapsto |z-p|$, $z \in \bd K$.

We call the equilibrium at $q$ \emph{nondegenerate}, if one of the following holds:
\begin{itemize}
\item if $\bd K$ is smooth at $q$, then the second derivative of $z \mapsto |z-p|$, $z \in \bd K$ at $q$ is not zero,
\item if $\bd K$ is not smooth at $q$, then both angles between $p-q$ and one of the two one-sided
tangent half lines of $\bd K$ at $q$ are acute.
\end{itemize}
Note that if $K \in \O_2$ or $K \in \P_2$, then this definition reduces to the usual
concept of nondegeneracy in these classes.
In the case of a smooth point, we call the nondegenerate equilibrium point $q$ \emph{stable} or \emph{unstable}, if
the second derivative at $q$ is positive or negative, respectively.
In the nonsmooth case, we call the equilibrium point \emph{unstable}.
The fact that the numbers of the stable and unstable equilibrium points of any $K \in \K_2$ are equal
if $K$ has only nondegenerate equilibrium points, follows from the Poincar\'e-Hopf Theorem (cf. \cite{Arnold}).
These two types of points form an alternating sequence in $\bd K$.

These definitions can be naturally adapted to convex bodies in $\O_3$ using the Euclidean distance function,
(distinguishing three types of nondegenerate equilibrium points: unstable, saddle and stable points; depending on the number of negative eigenvalues of the Hessian) and also to convex polytopes in $\Re^3$. We note that in the latter case,
unstable, saddle and stable points are vertices, relative interior points of edges and of faces, respectively.
For the definition of nondegeneracy in the piecewise smooth case in higher dimensions, the reader is referred to \cite{SOA}.
If $K$ has only nondegenerate equilibrium points, then the Poincar\'e-Hopf Theorem yields that
\[
S - H + U = 2,
\]
where $S$, $H$ and $U$ denote the numbers of the stable, saddle and unstable points of the body.

Throughout the paper, we deal with bodies that have only nondegenerate equilibrium points.

For simplicity, we denote the family of plane convex bodies $K \in \K_2$, with $S$ stable points
with respect to their centers of gravity, by $\{S \}$.
For convex bodies in $\O_3$ or in $\P_3$, we may define the class $\{S,U\}$ similarly,
where $S$ and $U$ denote the numbers of the stable and the unstable points with respect to the center of
gravity of the body.

If $K$ has $S$ stable points with respect to some $p \in \inter K$, let
$\F_{<}(K,p)$ be the family of (convex) subsets of $K$ such that any $K'
\in \F_{<}(K,p)$ has strictly less than $S$ stable points with respect
to $p$.
We obtain a similar notion, which we denote by $\F_{<}(K)$, if the
reference point is not fixed, but in the cases of both $K$ and $K'$ it
is the center of gravity of the corresponding body.
We define the (full) robustness of a planar convex body $K \in \K_2$ as follows.

\begin{defn}\label{defn:full}
Let $K \in \{ S \}$. Then we define the
\emph{downward robustness} (or simply \emph{robustness}) of $K$ as the
quantity \[ \rho(K) = \frac{\min \{ \area(K \setminus K') : K' \in
\F_{<}(K) \}}{\area(K)}. \]
\end{defn} 
\noindent In $\Re^3$, we may define $\rho(K)$ in an analogous way.
For brevity, we set $\rho_S = \sup \{
\rho(K): K \in \{ S\} \}$ and $\rho_{S,U} = \sup \{ \rho(K): K \in \{
S,U\} \}$.

In Section~\ref{sec:external}, our aim is to investigate robustness with
the reference point fixed, which we define below.
Note that in this definition the reference point need not be the center
of gravity of the body.

\begin{defn}\label{defn:external} Let $K \in \K_2$ and $p \in \inter K$.
Assume that $K$ has $S$ stable points with respect to $p$. We define the
\emph{downward external robustness of $K$} (or simply \emph{external
robustness}) with respect to $p$ as the quantity \[ \rex(K,p) =
\frac{\min \{ \area(K \setminus K') : K' \in \F_{<}(K,p) \}}{\area(K)}.
\] \end{defn}

\noindent For simplicity, we set
\[
\rex_S = \sup \{ \rex(K,G) : K \in \{S \}\},
\]
where $G$ denotes the center of gravity of $K$.
In $\Re^3$, we may define $\rex(K,p)$ and $\rex_{S,U}$ in an analogous way.

In Sections~\ref{sec:internal} and \ref{sec:internal_3D}, we examine \emph{internal robustness},
which we define below.
\begin{defn}\label{defn:internal}
Let $K \in \K_2$ and $p \in \inter K$. Assume that $K$ has $S$ stable points with respect to $p$.
Let $R(K,p) \subseteq \Re^2$ denote the set of the points such that $K$ has $S$ stable
points with respect to any point of $R(K,p)$.
The \emph{internal robustness} of $K$ with respect to $p$ is
\[
\rin(K,p) = \frac{\min \left\{
|q-p| : q \notin R(K,p) \right\}}{\perim K}  ,
\]
where $\perim K$ is the perimeter of $K$.
\end{defn}

\begin{rem}
By compactness, it is easy to see that
if $K$ has $S$ stable points with respect to $p$,
then $p$ has a neighborhood $U$ such that with respect to any $z \in U$, $K$ has $S$ stable points with respect to $z$.
Thus, if $K$ has only nondegenerate equilibrium points with respect to $p$,
then $\rin(K,p) > 0$.
\end{rem}

\begin{rem}
Definition \ref{defn:internal} does not distinguish between upward and downward robustness.
If the latter are understood as the minimal distance necessary to move $p$ to achieve
increase/decrease of $N$ then our definition refers to their minimum.
\end{rem}
\noindent Similarly like for full and external robustness, we set
\[
\rin_S = \sup \{ \rin(K,G) : K \in \{S \}\},
\]
wher $G$ is the center of gravity of $K$. In $\Re^3$, we define $\rin(K,p)$ and $\rin_{S,U}$ similarly, by replacing $\perim K$ by the square root of the surface area $\surf K$ of the body.

We note that it is easy to show that for each $S$ and $U$ the infimum of $\rho(K,p)$, $\rex(K,p)$ and $\rin(K,p)$ is zero
in any class $\{S\}$ and $\{S,U\}$.

\section{External robustness}\label{sec:external}

Our main result in this section is the following.

\begin{thm}\label{thm:external}
Let $K \in \K_2$ contain the origin in its interior, and assume that $K$ has $S \geq 3$ stable points
with respect to $o$.
Then
\[
\rex(K,o)\leq \frac{\tan\frac{\pi}{S} - \frac{\pi}{S}}{S\tan\frac{\pi}{S}},
\]
with equality if, and only if $K$ is a regular $S$-gon and $o$ is its center.
\end{thm}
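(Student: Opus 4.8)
The plan is to reduce the problem to destroying a single stable point by the cheapest possible truncation, to recognise that such a truncation is a circular cap centred at $o$, and then to optimise the resulting area expression over all admissible bodies. To begin, any $K'\in\F_<(K,o)$ must destroy the strict local minimum of $z\mapsto|z-o|$ at some stable point $q_i$, at distance $r_i=|q_i-o|$. Since the circle $C_i$ of radius $r_i$ about $o$ is tangent to $\bd K$ at $q_i$ and $z\mapsto|z-o|$ is constant along $C_i$, the most economical way to flatten this minimum is to replace an arc of $\bd K$ near an adjacent unstable point by an arc of a circle centred at $o$: the equilibria on such an arc are degenerate, so cutting marginally deeper removes the stable point. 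Writing $q_1,\dots,q_S$ and $u_1,\dots,u_S$ for the interlaced stable and unstable points, $\beta_j=\angle q_j o q_{j+1}$ (so $\sum_j\beta_j=2\pi$), and $A_j$ for the area of the cap removed at $u_j$ down to the circle of radius $\rho_j=\max(r_j,r_{j+1})$, each such cap yields in the limit a body with strictly fewer stable points, whence $\rex(K,o)\cdot\area(K)\le\min_j A_j$.

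\emph{Convexity bound and bookkeeping.} The crucial input is that the supporting line of $K$ at each $q_i$ is perpendicular to $oq_i$, hence tangent to $C_i$; thus between $q_j$ and $q_{j+1}$ the boundary lies inside the two tangent lines, which meet at a vertex of the circumscribed polygon. Each cap is therefore no larger than the corresponding kite-minus-sector region. I would first run the argument in the model case $r_i\equiv r$, where this reads $A_j\le g(\beta_j)$ with $g(\beta)=r^2\!\left(\tan\frac{\beta}{2}-\frac{\beta}{2}\right)$, and where the caps tile $K$ outside the inscribed disk so that $\area(K)=\pi r^2+\sum_j A_j$ exactly.

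\emph{Optimisation.} It then remains to maximise $\frac{\min_j A_j}{\pi r^2+\sum_k A_k}$ subject to $0\le A_j\le g(\beta_j)$ and $\sum_j\beta_j=2\pi$. For fixed angles the largest ratio is obtained by setting every $A_j$ equal to $g(\beta_{\min})$, giving $\frac{g(\beta_{\min})}{\pi r^2+Sg(\beta_{\min})}$; since $g$ is increasing and $\beta_{\min}\le 2\pi/S$, this is at most $\frac{g(2\pi/S)}{\pi r^2+Sg(2\pi/S)}=\frac{\tan\frac{\pi}{S}-\frac{\pi}{S}}{S\tan\frac{\pi}{S}}=:B_S$. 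Equality forces $\beta_j\equiv 2\pi/S$, forces every cap to meet the circumscribing tangent lines, and forces the $r_i$ to be equal, i.e. $K$ is a regular $S$-gon centred at $o$; the same analysis gives the matching lower bound $\rex\ge B_S$ for that polygon, since destroying any of its incircle-tangent stable points costs at least a full corner cap.

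\emph{Main obstacles.} I expect the difficulty to be twofold. First, one must justify rigorously that a circular cap is extremal: that destroying a stable point really forces removal of at least the cap region (the lower bound), and that no non-circular or multi-chord truncation does better; here the degeneracy of a boundary circular arc and the $\min$/$\inf$ subtlety in Definition~\ref{defn:external} need care. Second, the identity $\area(K)=\pi r^2+\sum_j A_j$ and the bound $A_j\le g(\beta_j)$ are exact only when all $r_i$ coincide; for unequal radii the inscribed disk touches $\bd K$ only at the closest minima and the caps no longer tile cleanly, so I anticipate needing either a reduction showing the extremiser has equal radii, or a direct estimate on the narrowest wedge combined with the convexity of $g$ to recover $B_S$.
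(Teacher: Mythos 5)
Your skeleton is the same as the paper's: split $K$ into the sectors determined by the rays through consecutive stable points, observe that the cheapest truncations are the regions of each sector lying outside the circle of radius $\max(|s_i|,|s_{i+1}|)$ (your caps $A_j$, the paper's $X_i$), and then maximize $\min_j A_j/\area(K)$ using the fact that the supporting line at each stable point is perpendicular to its position vector. Your optimization in the equal-radii model case is correct, and there it is even slightly cleaner than the paper's computation: monotonicity of $t \mapsto t/(\pi r^2+St)$ together with $\beta_{\min}\le 2\pi/S$ replaces any convexity argument, and your equality analysis in that case is sound.

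However, the proof stops exactly where the real work begins, and you say so yourself: for unequal $r_i$ both pillars of your computation --- the tiling identity $\area(K)=\pi r^2+\sum_j A_j$ and the bound $A_j\le g(\beta_j)$ --- fail, and neither of the two repairs you float is carried out. This is not a routine perturbation; the paper devotes a separate lemma (Lemma~\ref{lem:angle}) precisely to this point, proving by a two-step geometric symmetrization that among all convex sectors with prescribed angle at $o$, right angles at the two stable points, and prescribed outer area, the symmetric right kite has minimal total area. With that lemma one first truncates so that all outer areas equal a common value $x$, replaces each sector by its symmetric kite, and only then optimizes over the angles; since the kite radii are then forced by the equal-area constraint ($r_i^2(\tan(\alpha_i/2)-\alpha_i/2)=x$), the functional becomes $\area(K')/x=\sum_i f(\alpha_i/2)$ with $f(\alpha)=\tan\alpha/(\tan\alpha-\alpha)$, and one genuinely needs the strict convexity of $f$ (a Jensen-type step), not monotonicity: your simpler argument does not survive the reduction, because after symmetrization the denominator no longer contains a common inscribed-disk term. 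Two further gaps: (i) you implicitly assume every central angle $\beta_j<\pi$ (otherwise $g(\beta_j)$ is meaningless and the tangent lines at $q_j,q_{j+1}$ bound no kite); the paper needs a separate case for a sector with angle at least $\pi$, showing the resulting bound is then strictly smaller. (ii) The lower bound you assert for the regular $S$-gon (``destroying a stable point costs at least a full corner cap''), which the equality statement requires, needs the alternation argument the paper gives: if a convex $K'\subset K$ meets every $X_i$, then the distance function on $\bd K'$ alternates enough to have at least $S$ local minima, so $K'$ still has $S$ stable points. In summary, the strategy is the right one and the special case is handled correctly, but the central lemma treating unequal radii --- the heart of the paper's proof --- is missing.
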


In light of the notations introduced in the previous section, we may reformulate this statement
in the following way, for the special case that $o$ is the center of gravity of $K$.

\begin{cor}
For any $S \geq 3$, we have
\[
\rex_S = \frac{\tan\frac{\pi}{S} - \frac{\pi}{S}}{S\tan\frac{\pi}{S}},
\]
and the planar convex bodies in $\{ S \}$ with maximal external robustness are the regular $S$-gons.
\end{cor}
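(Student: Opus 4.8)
The plan is to obtain the Corollary as the specialization of Theorem~\ref{thm:external} to the case where the fixed reference point is the center of gravity. First I would observe that for every $K\in\{S\}$ the centroid $G$ lies in $\inter K$ (the centroid of a convex body with nonempty interior is always interior), and that by the definition of the class $\{S\}$ the body $K$ has exactly $S\geq 3$ nondegenerate stable points with respect to $G$. Thus Theorem~\ref{thm:external} applies verbatim with $o=G$, yielding
\[
\rex(K,G)\leq \frac{\tan\frac{\pi}{S}-\frac{\pi}{S}}{S\tan\frac{\pi}{S}}
\]
for every such $K$, with equality exactly when $K$ is a regular $S$-gon and $G$ is its center. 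Taking the supremum over $K\in\{S\}$ gives $\rex_S\leq \frac{\tan(\pi/S)-\pi/S}{S\tan(\pi/S)}$.

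For the reverse inequality and the identification of the extremizers I would check that a regular $S$-gon $K_0$ actually realizes the bound. By its dihedral symmetry the center of gravity of $K_0$ coincides with its center $c$; with respect to $c$ the stable points are the $S$ edge-midpoints and the unstable points are the $S$ vertices, all nondegenerate, so $K_0\in\{S\}$. Hence the equality clause of Theorem~\ref{thm:external} applies and $\rex(K_0,G)=\rex(K_0,c)=\frac{\tan(\pi/S)-\pi/S}{S\tan(\pi/S)}$, so the supremum is attained and equals the stated value. Moreover, since for a regular $S$-gon the condition ``$G$ is its center'' is automatic, the equality case of the theorem says precisely that $\rex(K,G)$ meets the bound if and only if $K$ is a regular $S$-gon; this is exactly the assertion that the extremal bodies in $\{S\}$ are the regular $S$-gons.

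Because the Corollary is nothing more than Theorem~\ref{thm:external} read at the center of gravity, there is essentially no obstacle at this level beyond the three routine verifications just listed: interiority of $G$, the coincidence $G=c$ for regular polygons by symmetry, and the membership $K_0\in\{S\}$. All of the genuine difficulty sits in Theorem~\ref{thm:external} itself, whose proof I would approach through the radial function $r(\phi)=|q(\phi)-o|$: the stable points are its local minima, a minimal-area truncation lowers the count by capping an unstable point with an arc of a circle centred at $o$ (on which $r$ is constant, the degenerate threshold between keeping and losing a minimum), and the hard part there is the matching lower bound together with the rigidity argument showing that the regular $S$-gon is the \emph{unique} extremizer.
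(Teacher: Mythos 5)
Your proof is correct and takes essentially the same route as the paper, which likewise presents the Corollary as an immediate specialization of Theorem~\ref{thm:external} to the case $o=G$; the routine checks you supply (interiority of $G$, the coincidence of $G$ with the center of a regular $S$-gon by symmetry, and attainment of the supremum) are exactly what that reduction needs. Your closing sketch of how you would prove Theorem~\ref{thm:external} itself differs from the paper's kite-decomposition argument, but it plays no role in the Corollary and does not affect the validity of your derivation.
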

First, we prove Lemma~\ref{lem:angle}.

\begin{lem}\label{lem:angle}
Let $Q$ be a convex quadrangle with vertices $a,b,c,d$ in counterclockwise order. Let the angles of $Q$
at $b$ and $d$ be right angles, and let $\alpha \in (0,\pi)$ denote the angle at $a$.
Set $r = \max \{ |b-a|, |d-a| \}$, and for any plane convex body $C \subseteq Q$ containing $[a,b] \cup [a,d]$,
let $X(Q,C) = \{ z \in C: |z-a| \geq r \}$.
Then, the quantity $\mu(Q,C)=\frac{\area(X(Q,C))}{\area(C)}$ is maximal over the convex quadrilaterals $Q$, with given angles, and over plane convex bodies $C \subseteq Q$ containing $[a,b] \cup [a,d]$ if, and only if, $C=Q$ and $Q$ is
symmetric about $[a,c]$.
\end{lem}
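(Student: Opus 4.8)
The plan is to work in polar coordinates centered at $a$ and to turn the maximization of $\mu$ into a minimization of a ``near fraction''. Place $a$ at the origin and let $D_r$ be the disc of radius $r$ about $a$; since $X(Q,C)=C\setminus\inter D_r$ we may rewrite
\[
\mu(Q,C)=1-\frac{\area(C\cap D_r)}{\area(C)},
\]
so maximizing $\mu$ is the same as minimizing $\area(C\cap D_r)/\area(C)$. In polar form $\area(C)=\tfrac12\int_{-\alpha/2}^{\alpha/2}\ell_C(\theta)^2\dif\theta$ and $\area(C\cap D_r)=\tfrac12\int_{-\alpha/2}^{\alpha/2}\min(\ell_C(\theta),r)^2\dif\theta$, where $\ell_C$ is the radial function of $C$ from $a$ on the wedge of opening $\alpha$. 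I record two facts used throughout: $C$ is star-shaped with respect to $a$; and, because $|b-a|,|d-a|\le r$, the triangle $T:=\conv\{a,b,d\}$ satisfies $T\subseteq D_r$, while $Q=\conv(T\cup\{c\})$ and $T\subseteq C\subseteq Q$.

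First I would fix $Q$ and prove that $C=Q$ is optimal. Since $T\subseteq C\subseteq Q$, the removed set $R:=Q\setminus C$ lies in the far triangle $\conv\{b,c,d\}$, and $C':=C\cap\conv\{b,c,d\}$ is its complement there; as $C$ is convex and contains $T$, the shared boundary $\gamma=\bd C\cap\inter\conv\{b,c,d\}$ is a convex arc from $b$ to $d$, with $C'$ on the side of $\gamma$ nearer $a$ and $R$ on the side nearer $c$. A short computation reduces the desired $\mu(Q,Q)\ge\mu(Q,C)$ to
\[
\frac{\area(R\cap D_r)}{\area(R)}\le\frac{\area(C'\cap D_r)}{\area(C')},
\]
the elementary inequality $\area(C'\cap D_r)/\area(C')\le\area(C\cap D_r)/\area(C)$ (adjoining the all-near triangle $T$) taking care of the rest. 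In each fixed direction the points of $C'$ precede those of $R$ in distance from $a$, which yields the pointwise comparison of the per-direction near fractions. The hard part is that pointwise domination of ratios does not by itself give the comparison of the area-weighted near fractions — a Simpson-type pitfall — and this is exactly where convexity must enter: the convexity of $\gamma$ forces the angular profiles of $C'$ and $R$ to be correlated in the right way, equivalently the distance-from-$a$ distribution on $R$ stochastically dominates that on $C'$, which upgrades the pointwise inequality to the integral one. I expect this to be the technical heart of the argument, with the equality case $R=\varnothing$, i.e. $C=Q$, coming from strictness in the dominance comparison.

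With $C=Q$ in hand it remains to minimize the near fraction over quadrilaterals with the prescribed angles. Such a $Q$ is determined up to similarity by $t:=|d-a|/|b-a|\in(0,1]$, so I normalize $|b-a|=r=1$ and treat everything as a function of $t$. Here the right angles pay off: $bc$ is tangent to $\partial D_1$ at $b$, hence lies outside $D_1$, while $dc$ is tangent to the circle of radius $t$; consequently $Q\cap D_1$ is the sector of opening $\alpha$ and radius $1$ with a piece near the direction of $d$ cut off where $dc$ meets $\partial D_1$, and both $\area(Q)$ and $\area(Q\cap D_1)$ are explicit in $t$ and $\alpha$ (the near area carrying an $\arcsin$ term). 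I would then differentiate the resulting one-variable near fraction and show it is minimized precisely at $t=1$; there $Q$ is the kite symmetric about $[a,c]$, with $b,d\in\partial D_1$, both far edges tangent to $D_1$, and $Q\cap D_1$ an honest sector of area $\tfrac12\alpha$, giving $\area(Q)=\tan(\alpha/2)$ and
\[
\mu=1-\frac{\alpha}{2\tan(\alpha/2)}.
\]
This is attained if and only if $C=Q$ and $Q$ is symmetric about $[a,c]$. The monotonicity in $t$ is a routine but slightly delicate calculus estimate; combined with the convexity step above it yields both the bound and the equality characterization.
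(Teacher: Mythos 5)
Your reduction is set up correctly: writing $\mu = 1 - \frac{\area(C\cap D_r)}{\area(C)}$, splitting $Q$ into $T=\conv\{a,b,d\}$, $C'=C\cap\conv\{b,c,d\}$ and $R=Q\setminus C$, and using the mediant inequality, the fixed-$Q$ claim $\mu(Q,Q)\ge\mu(Q,C)$ does follow from
\[
\frac{\area(R\cap D_r)}{\area(R)}\le\frac{\area(C'\cap D_r)}{\area(C')}.
\]
But this inequality is exactly where your proof stops being a proof. You correctly observe that the per-direction radial comparison does not aggregate (Simpson's paradox), and then assert that convexity of the arc $\gamma$ yields stochastic dominance of the distance-from-$a$ distributions. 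No mechanism is supplied: ``the angular profiles are correlated in the right way'' and ``stochastic dominance'' are not interchangeable statements, and neither one is derived from convexity; you yourself flag this as the technical heart, and it is simply missing. Worse, the claim you need here --- that for \emph{every} fixed $Q$ the functional $\mu(Q,\cdot)$ is maximized at $C=Q$ --- is strictly stronger than anything the paper establishes. The paper's proof never fixes $Q$: at a joint maximizer $(Q,C)$ with $C\ne Q$ it builds a \emph{different}, smaller quadrilateral $Q'$ (cutting with the line through $z$ parallel to $[c,d]$) together with $C'=Q'\cap H'$, and shows $\mu(Q',C')>\mu(Q,C)$; the freedom to modify $Q$ is what lets it avoid your dominance lemma entirely. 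So you cannot import the paper's argument to fill this hole without restructuring your whole plan.

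The second stage has the same status. Reducing to the one-parameter family $t=|d-a|/|b-a|$ is legitimate (note the domain is $t\in(\max\{\cos\alpha,0\},1]$, since for $\alpha<\pi/2$ and $t\le\cos\alpha$ the two perpendiculars no longer close up into a convex quadrilateral), and your value at $t=1$ is correct, but the ``routine but slightly delicate calculus'' is not carried out. In polar coordinates the quantity to be minimized is
\[
F(t)=\frac{\alpha-\arccos t+t\sqrt{1-t^2}}{\tan\theta^*+t^2\tan(\alpha-\theta^*)},\qquad \theta^*=\arctan\frac{t-\cos\alpha}{\sin\alpha},
\]
and showing $F(t)>F(1)$ for all admissible $t<1$ is a genuine argument, not a remark; moreover you need strictness both here and in the dominance step, since your equality characterization ($C=Q$ and $Q$ symmetric) rests entirely on it. In short: your architecture (optimize $C$ for each fixed $Q$, then optimize over $Q$) is coherent and genuinely different from the paper's single local-improvement argument, and numerically its two claims look plausible, but both load-bearing steps are asserted rather than proved, so as it stands this is a plan for a proof, not a proof.
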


\begin{proof}
It is easy to see that (if we permit $Q$ to be a degenerate quadrangle) the maximum of $\mu(Q,C)$ is attained
for some $Q$ and $C$, hence, we show that if $|b-a| \neq |d-a|$ or $C \neq Q$, then $\mu(Q,C)$ is not maximal.

Without loss of generality, let $r=|b-a| \geq |d-a|$, and assume that $X_C \neq \emptyset$.
Let $z$ be the point of $C$ with $|z-a|=r$ and farthest from $b$, and let $H$ denote the closed half plane containing
$[d,z]$ on its boundary, and $a$ in its interior.

If $[d,z] \notin \bd C$, then we may remove a part of $C \setminus X_C$, and thus increase $\mu$.
If $X_C \neq (X_Q \cap H)$ then we may increase $\area(X(Q,C))$, and thus, increase $\mu$.
Consequently, it suffices to examine the case that $[d,z] \subset \bd C$ and $X(Q,C) = X(Q,Q) \cap H$.

Consider the case that $C \neq Q$, which yields that $z \notin [c,d]$.
Let $L$ be the line passing through $z$ and parallel to $[c,d]$, and $H'$ be
the closed half plane bounded by $L$ and containing $b$.
Let $c'$ and $d'$ denote the intersection point of $L$ with $[b,c]$ and $[d,a] \cup [a,b]$, respectively.
Set $Q'=\conv \{ a,b,c',d'\}$ if $d' \in [d,a]$, and $Q'=\conv \{ d',b,c'\}$ if $d' \in [a,b]$ (cf. Figure~\ref{fig:robex}).
Furthermore, set $C'=Q' \cap H'$. Observe that $\area(X(Q',C')) > \area(X(Q,C))$ and
$\area(C' \setminus X(Q',C')) < \area(C \setminus X_C)$. Thus, $\mu(Q,C) < \mu(Q',C')$,
which yields that if $\mu(Q,C)$ is maximal, then $C = Q$.

\begin{figure}[here]
\includegraphics[width=0.7\textwidth]{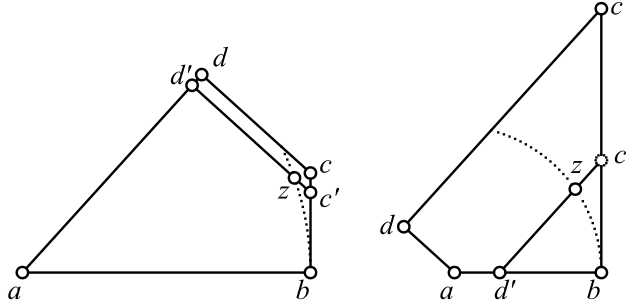}
\caption[]{An illustration for the proof of Lemma~\ref{lem:angle}}
\label{fig:robex}
\end{figure}

Finally, we examine the case that $C=Q$ and $|b-a| > |d-a|$.
Let $[a',d']$ be defined by the properties
\begin{itemize}
\item $a' \in [a,b]$ and $d' \in [c,d]$,
\item $[a',d']$ and $[a,d]$ are parallel, and
\item $|d'-a'| = |b - a'|$.
\end{itemize}
Let $Q' = C' = \conv \{ a', b,c,d' \}$. Observe that $X(Q,C) \subsetneq X(Q',C')$ and $Q' \subsetneq Q$.
Hence, $\mu(Q,C)$ is not maximal, and the assertion readily follows.
 \end{proof}

\begin{proof}[Proof of Theorem~\ref{thm:external}]
Let the stable points of $K$ be $s_1, s_2, \ldots, s_S$ in counterclockwise order in $\bd K$.
Let $K_i$ be the closed set of points $z$ of $K$ such that the position vectors $s_i$, $z$ and $s_{i+1}$
are in counterclockwise order around $o$.
For any $i=1,2,\ldots,n$, let $r_i = \max \{ |s_i|,|s_{i+1}| \}$
and set $X_i = \{ z \in K_i : |z| > r_i \}$.

We observe that if $K' \subset K$ has less than $S$ stable points with respect to $o$,
then for some value of $i$, $K' \cap X_i = \emptyset$.
Indeed, any such $K'$ has at most $S-1$ stable points.
On the other hand, if there is some point $z_i \in K' \cap X_i$ for every value of $i$, then
the sequence $|z_1|-|s_1|,|s_2|-|z_1|,|z_2|-|s_2|, \ldots, |s_1|-|z_S|$ alternates, 
which yields that the Euclidean distance function, defined on the points of $\bd C$,
has at least $S$ local minima; a contradiction.
Thus, noting that $o \notin X_i$ for any value of $i$, we obtain that
\[
\rex(K,o)= \frac{ \min \{ \area(X_i) : i=1,2,\ldots, S \}}{\area(K)}.
\]
We intend to maximize this quantity over $K \in \K_2$.

First, observe that if $\area(X_i)$ is not minimal for some value of $i$, then we may truncate $X_i$
and thus decrease $\area(K)$. Hence, we may assume that $\area(X_i)$ is the same quantity for every value of $i$.
Now, let $\alpha_i = \angle(s_i,o,s_{i+1})$.

For each $i$ with $\alpha_i < \pi$, let $K'_i = \conv \{ x_i,o,y_i,w_i\}$ be the right kite such that
\begin{itemize}
\item $x_i$ and $y_i$ are on the half lines, starting at $o$, that contain $s_i$ and $s_{i+1}$, respectively,
\item $\area(X'_i) = \area(X_i)$, where $X'_i = \{ z \in K'_i : |z| \geq  |s_i| \}$. 
\end{itemize}
Note that by Lemma~\ref{lem:angle}, we have $\area(K_i) \geq \area(K'_i)$ for every $i$, with equality
if, and only if $K_i = K'_i$.

\emph{Case 1}, if $\alpha_i < \pi$ for every value of $i$.
We let $K' = \bigcup_{i=1}^S K'_i$, $x = \area(X'_i)$, and note that $K'$ is not necessarily convex, and that $\area(X'_i)$ is independent of $i$.

Observe that $\rex(K,o)\leq \frac{x}{\area(K')}$, with equality if, and only if $K=K'$.
Thus, it suffices to show that $\frac{x}{\area(K')}$ is maximal if, and only if $K'$ is a regular $S$-gon, or in other words, if $\alpha_i = \frac{2\pi}{S}$ for every $i$. Since the maximum of $\frac{x}{\area(K')}$ is clearly attained,
we show only that if $\alpha_i \neq \alpha_{i+1}$ for some $i$, then this quantity is not maximal.

Assume that $\alpha_i \neq \alpha_{i+1}$ for some $i$.
Set $r_i = |x_i|$ and $r_{i+1} = |x_{i+1}|$.
Then
\[
x = r_i^2 \left( \tan\frac{\alpha_i}{2} - \frac{\alpha_i}{2} \right) = r_{i+1}^2 \left( \tan\frac{\alpha_{i+1}}{2} - \frac{\alpha_{i+1}}{2} \right) ,
\]
and
\[
\area(K_i)+\area(K_{i+1}) = r_i^2 \tan\frac{\alpha_i}{2} + r_{i+1}^2 \tan\frac{\alpha_{i+1}}{2} .
\]
Using the notation $\alpha = \frac{\alpha_i}{2}$ and $\beta=\frac{\alpha_{i+1}}{2}$,
the first condition can be transformed into the form
\[
\left( \frac{r_{i+1}}{r_i} \right)^2 = \frac{\tan\alpha - \alpha}{\tan\beta - \beta} ,
\]
which yields that
\[
A= \frac{\area(K_i)+\area(K_{i+1})}{x} = \frac{\frac{\tan\alpha - \alpha}{\tan\beta - \beta} \tan \beta + \tan \alpha}{\tan\alpha-\alpha} = \frac{\tan\alpha}{\tan\alpha - \alpha}+\frac{\tan\beta}{\tan\beta-\beta}.
\]

We need only show that under the constraint that $\alpha + \beta$ is constant, $A$ is minimal if and only if
$\alpha= \beta$. But this indeed holds, since for the function $f(\alpha) = \frac{\tan\alpha}{\tan\alpha - \alpha}$,
we have
\[
f''(\alpha)=\frac{2 \tan\alpha(\alpha^2-\sin^2\alpha)}{\cos^2\alpha(\tan\alpha-\alpha)^3} > 0
\]
for every $\alpha \in \left( 0, \frac{\pi}{2} \right)$, which yields that $f$ is strictly convex.

\emph{Case 2}, if $\alpha_i \geq \pi$ for some value of $i$.
Observe that in this case $\alpha_i \geq \pi$ for exactly one value of $i$.
Let this value be $S$, and set $\sum_{i=1}^{S-1} \alpha_i = \omega \leq \pi$.
Let $K' = K_S \cup \left( \bigcup_{i=1}^{S-1} K'_i \right)$, and $x = \area(X'_i)$ for some $i \neq S$.
Then $\rex(K,o) \leq \frac{x}{\area(K')}$.
Using the argument of Case 1, we have that if $\frac{x}{\area(K')}$ is maximal,
then $K'_1, K'_2, \ldots, K'_{S-1}$ are congruent right kites, with their angles at $o$
equal to $\frac{\omega}{n-1}$. According to our consideration, we have $\area(X_S) = \area(X'_i)$
and $\area(K_S) \geq \area(K'_i)$ for any $i \neq S$.
Thus, in this case we have
\[
\rex(K,o) \leq \frac{\tan\frac{\omega}{2(S-1)} - \frac{\omega}{2(S-1)}}{S\tan\frac{\omega}{2(S-1)}} <
\frac{\tan\frac{\pi}{S} - \frac{\pi}{S}}{S\tan\frac{\pi}{S}} =\rex(P,o),
\]
where $P$ is a regular $S$-gon, with the origin as its center.
\end{proof}

From the proof of Theorem~\ref{thm:external}, one can easily deduce Corollary~\ref{cor:monotonicity}.

\begin{cor}\label{cor:monotonicity}
Assume that $K \in \K_2$ has $S$ stable points with respect to $p \in \inter K$.
Then there exists $\varepsilon=\varepsilon(K)$ such that if $K' \subset K$ has less than $S$ stable points
with respect to $p \in \inter K'$, and $\area(K \setminus K') - \rex(K,p) \area(K) \leq \varepsilon$,
then $\rex(K',p) \geq \rex(K,p)$.
\end{cor}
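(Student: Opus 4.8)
The plan is to read off, from the proof of Theorem~\ref{thm:external}, the exact value $\rex(K,p)\area(K) = \min_i \area(X_i) =: m$, where $s_1,\dots,s_S$ are the stable points with respect to $p$, the sets $K_i$ and $X_i$ are as in that proof, and $m>0$ by nondegeneracy. The first observation is that a near-optimal truncation is \emph{localized}. Since $K'$ has fewer than $S$ stable points, the argument in the proof forces $K'\cap X_{i_0}=\emptyset$ for some index $i_0$; hence $X_{i_0}\subseteq K\setminus K'$ and $\area(K\setminus K')\ge \area(X_{i_0})\ge m$. Combined with the hypothesis $\area(K\setminus K')\le m+\varepsilon$, this gives $\area(X_{i_0})\le m+\varepsilon$ and, more importantly, $\area\big((K\setminus K')\setminus X_{i_0}\big)\le\varepsilon$: apart from the single peak $X_{i_0}$, at most an $\varepsilon$-sliver is removed.

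Second, I would analyze the equilibrium structure of $K'$, writing $X'_j$ for the regions of $K'$ defined as in the proof. Away from $K_{i_0}$ the bodies $K$ and $K'$ agree up to the $\varepsilon$-sliver, so each stable point $s_j$ with $j\notin\{i_0-1,i_0,i_0+1\}$ persists and $\area(X'_j)\ge\area(X_j)-\varepsilon\ge m-\varepsilon$. Near the cut, removing $X_{i_0}$ flattens the peak (the unstable point) between $s_{i_0}$ and $s_{i_0+1}$, so these two stable points merge into a single surviving one $s^*=s_{i_0+1}$ (taking $|s_{i_0+1}|\le|s_{i_0}|$): the region on the $s_{i_0+2}$-side of $s^*$ is unchanged, while the region on the $s_{i_0-1}$-side is angularly wider than $K_{i_0-1}$ and has threshold radius $\max(|s_{i_0-1}|,|s_{i_0+1}|)\le\max(|s_{i_0-1}|,|s_{i_0}|)$, whence it \emph{contains} the old $X_{i_0-1}$ and so has area $\ge m-O(\varepsilon)$. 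Consequently $\min_j\area(X'_j)\ge m-C\varepsilon$ for a constant $C=C(K)$.

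The conclusion then follows from an area count. Since $\area(K')\le\area(K)-m$, we get
\[
\rex(K',p)=\frac{\min_j\area(X'_j)}{\area(K')}\ge\frac{m-C\varepsilon}{\area(K)-m},
\]
and an elementary manipulation shows the right-hand side is at least $m/\area(K)=\rex(K,p)$ precisely when $\varepsilon\le m^2/(C\,\area(K))$; this choice fixes $\varepsilon(K)$. The mechanism is that removing area at least $m$ produces a definite gain in the denominator that outweighs the $O(\varepsilon)$ loss in the numerator.

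The main obstacle is the local analysis underlying the second paragraph: I must rule out that the $\varepsilon$-sliver creates a new stable point carrying a small region $X'_j$, which would make $\rex(K',p)$ small and wreck the bound. Here I would use a budget argument together with the nondegeneracy of the equilibria of $K$: creating a new stable point necessarily creates a new stable--unstable pair and thus raises the count, so to still achieve fewer than $S$ stable points one would have to remove a \emph{second} peak, at an additional cost of at least $m$ in area, which is impossible once $\varepsilon<m$. Hence for small $\varepsilon$ the body $K'$ has exactly $S-1$ stable points, obtained by the single merge described above, with no spurious small regions. Making this perturbation statement uniform over the admissible $K'$ (so that $C$ and $\varepsilon$ depend on $K$ alone) is the technical heart of the argument; the containment of old regions in new ones and the final inequality are then routine.
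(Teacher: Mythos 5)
Your first step and your closing arithmetic are sound: the argument in the proof of Theorem~\ref{thm:external} does force $K'\cap X_{i_0}=\emptyset$ for some $i_0$, hence $\area(K')\le\area(K)-m$ with $m=\rex(K,p)\area(K)$, and the implication $\bigl(\rex(K',p)\area(K')\ge m-\delta\bigr)\Rightarrow\rex(K',p)\ge\frac{m-\delta}{\area(K)-m}\ge\frac{m}{\area(K)}$ for $\delta\le m^2/\area(K)$ is the right way to finish. The gap is in how you obtain the intermediate lower bound, i.e.\ your second and fourth paragraphs. The persistence claim is false as stated: the stable points of $K'$ need not be the points $s_j$ at all; an area-$\varepsilon$ sliver can displace every one of them, and for fixed $K$ the displacement of the norms and angles is of order a fractional power of $\varepsilon$ (e.g.\ $\varepsilon^{2/3}$ and $\varepsilon^{1/3}$ for a smooth boundary), not $O(\varepsilon)$. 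Since the regions $X'_j$ are built from the sectors and radii of the \emph{new} stable points, the containment $X'_j\supseteq X_j\cap K'$ can fail (area is traded between adjacent regions when sector boundaries move), so $\area(X'_j)\ge\area(X_j)-\varepsilon$ is unjustified and the linear error rate is wrong. Your merge analysis describes only the idealized truncation $K\setminus X_{i_0}$, not a general admissible $K'$, and the budget argument of your last paragraph pins down only the \emph{number} of stable points of $K'$ (exactly $S-1$), which by itself says nothing about the sizes of its regions. The approach is repairable—convexity gives sup-norm control of the radial function, and nondegeneracy then localizes the new stable points—but that is exactly the ``technical heart'' you defer, so as written the proof is incomplete.

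The missing idea, which makes the corollary the easy consequence of Theorem~\ref{thm:external} that the paper asserts and needs no perturbation analysis at all, is to run the region-counting argument a \emph{second} time, on the competitors for $\rex(K',p)$ rather than on $K'$ itself. Let $K''\in\F_{<}(K',p)$; then $K''$ is a convex subset of $K$ with at most $S-2$ stable points with respect to $p$. The alternation argument in the proof of Theorem~\ref{thm:external} shows, in the same way, that a convex subset of $K$ containing $p$ in its interior which meets at least $k$ of the regions $X_1,\dots,X_S$ has at least $k$ stable points with respect to $p$: between two angularly consecutive chosen points $z_i\in X_i$ and $z_{i'}\in X_{i'}$, the distance function on the boundary is at most $|s_{i+1}|\le r_i$ and at most $|s_{i'}|\le r_{i'}$ at the angles of $s_{i+1}$ and $s_{i'}$, while it exceeds $r_i$ and $r_{i'}$ at the endpoints, so it has a local minimum strictly inside that arc. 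Hence $K''$ must miss at least \emph{two} regions $X_{i_1},X_{i_2}$; their interiors are disjoint, so $\area(K\setminus K'')\ge 2m$, and therefore
\[
\area(K'\setminus K'')=\area(K\setminus K'')-\area(K\setminus K')\ge 2m-(m+\varepsilon)=m-\varepsilon .
\]
Taking the minimum over $K''$ gives $\rex(K',p)\ge\frac{m-\varepsilon}{\area(K')}\ge\frac{m-\varepsilon}{\area(K)-m}\ge\frac{m}{\area(K)}=\rex(K,p)$ as soon as $\varepsilon\le m^2/\area(K)$, which fixes $\varepsilon(K)$ and replaces your entire stability discussion by three lines.
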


\section{Internal robustness in the plane}\label{sec:internal}

Our main theorem is the following:

\begin{thm}\label{thm:internal}
For any $K \in \K_2$ and $p \in \inter K$, if $K$ has $S \geq 3$ stable points with respect to $p$, then
$\rin(K,p) \leq \frac{1}{2S}$, with equality if, and only if,
$K$ is a regular $S$-gon, and $p$ is its center.
\end{thm}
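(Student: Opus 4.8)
The plan is to reduce the theorem to a local estimate along the boundary. Since $p$ has a neighborhood contained in $R(K,p)$ (the Remark), we have $\min\{|q-p|:q\notin R(K,p)\}=\dist(p,\partial R(K,p))$, so it suffices to prove $\dist(p,\partial R(K,p))\le \frac{1}{2S}\perim K$, with equality only for the regular $S$-gon centered at $p$. Writing the equilibria as the critical points of $z\mapsto|z-p|$ on $\bd K$, a smooth stable point $s$ is a local minimum and satisfies $|s-p|<1/\kappa(s)$, i.e.\ $p$ lies strictly inside the osculating circle at $s$; the set $\partial R(K,p)$ consists of reference points admitting a degenerate equilibrium (the caustic of $\bd K$, together with the normals through the vertices in the nonsmooth case). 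Let $s_1,u_1,s_2,u_2,\dots,s_S,u_S$ be the stable and unstable points in cyclic order, let $a_i$ be the length of the boundary arc from $s_i$ to $u_i$ and $c_i$ that from $u_i$ to $s_{i+1}$, so that $\sum_i(a_i+c_i)=\perim K$.

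The key step is the estimate $\dist(p,\partial R(K,p))\le a_i$ for every $i$ (and symmetrically $\le c_{i-1}$). To prove it I would parametrize the arc from $s_i$ to $u_i$ by arc length, $\gamma:[0,a_i]\to\bd K$, $\gamma(0)=s_i$, $\gamma(a_i)=u_i$, let $\nu(t)$ be the inward unit normal, set $d_i=|s_i-p|$, and follow the reference path $p(t)=\gamma(t)+d_i\nu(t)$, which keeps $\gamma(t)$ a critical point at the constant distance $d_i$. By the Frenet equations its speed is $|p'(t)|=1-d_i\kappa(t)\le 1$ as long as $\gamma(t)$ is a minimum, so the length of the path up to time $t$ never exceeds $t$. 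If $d_i\kappa(t^\ast)=1$ for some first $t^\ast\le a_i$, then $\gamma(t^\ast)$ is degenerate and $p(t^\ast)\in\partial R(K,p)$, giving $\dist(p,\partial R)\le t^\ast\le a_i$. Otherwise $\gamma(t)$ remains a nondegenerate minimum throughout, so at $p(a_i)$ the point $u_i$ is \emph{stable}; since $u_i$ is \emph{unstable} with respect to $p$, and both $p$ and $p(a_i)$ lie on the normal line of $\bd K$ at $u_i$, the center of curvature $c(u_i)$ lies on the segment $[p,p(a_i)]$ and is a degenerate reference point. Hence $\dist(p,\partial R)\le|p-c(u_i)|\le|p-p(a_i)|\le\int_0^{a_i}(1-d_i\kappa)\,dt\le a_i$, with strict inequality unless $\kappa\equiv 0$ on the arc.

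Given this lemma, the inequality follows by averaging: $\dist(p,\partial R)$ is at most the minimum of the $2S$ numbers $a_1,\dots,a_S,c_1,\dots,c_S$, which is at most their average $\frac{1}{2S}\sum_i(a_i+c_i)=\frac{1}{2S}\perim K$. Thus $\rin(K,p)\le\frac{1}{2S}$. For equality every arc must be a straight segment (so that the lemma is tight) and all $2S$ arcs must have equal length; consequently $\bd K$ is a polygon whose vertices are the unstable points $u_i$, each stable point $s_i$ is the midpoint of its edge $[u_{i-1},u_i]$, and all edges are equal. The midpoint condition forces $p$ to be equidistant from all $u_i$, so $p$ is the circumcenter and $K$ is a regular $S$-gon with $p$ as its center; a direct computation (the perpendiculars through the vertices lie at distance $\tfrac12\ell$ from the center, where $\ell$ is the common edge length and $\perim K=S\ell$) confirms $\rin=\tfrac{\ell/2}{S\ell}=\tfrac1{2S}$.

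The main obstacle is the key lemma, specifically two points where the softness of the setup must be controlled. First, that reaching a degenerate equilibrium really places the reference point on $\partial R(K,p)$ rather than merely on the caustic: this is where the Poincar\'e--Hopf constraint $S=U$ forces the degeneracy to be a saddle--node annihilation of the moving minimum with a neighboring maximum, so that the count genuinely drops. Second, the adaptation to nonsmooth (vertex) unstable points of $K\in\K_2$, where $1/\kappa$ and $c(u_i)$ must be replaced by the family of normals emanating from the corner and the degeneracy is the instant the foot point reaches the vertex (exactly the situation realized by the regular $S$-gon). The equality discussion also hinges on the strictness in the lemma, which is what rules out curved arcs.
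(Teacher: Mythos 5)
Your argument is correct and reaches the conclusion by a genuinely different route from the paper's. The paper works with the inscribed polygon $P=\conv\{u_1,\dots,u_S\}$ of unstable points and splits the proof into two lemmas: first, that the ball $p+\rin(K,p)\B$ admits a ``strip cover'' by the sides of $P$, i.e.\ for each side $[u_i,u_{i+1}]$ the ball lies in a strip bounded by parallel lines through $u_i$ and $u_{i+1}$ (proved by a three-case analysis tracking centers of curvature and one-sided normals); second, a discrete lemma stating that any such strip has width at most the chord length $|u_{i+1}-u_i|$, whence $2\rho\le$ shortest side of $P\le\perim P/S\le\perim K/S$, with the equality case delegated entirely to the discrete statement. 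You never form the inscribed polygon: you prove the per-arc estimate $\dist\bigl(p,\Re^2\setminus R(K,p)\bigr)\le a_i$ (and $\le c_i$) by sliding the reference point along the parallel path $p(t)=\gamma(t)+d_i\nu(t)$, stopping at the first degeneracy, and then average over the $2S$ arcs. The two intermediate inequalities (your ``radius $\le$ each arc over a chord'' versus the paper's ``diameter $\le$ chord'') are not equivalent, but both sum to $2S\,\rin(K,p)\perim K\le\perim K$. Both proofs rest on the same unproved ingredient, the caustic characterization of $\rin$: the paper cites Poston--Stewart and uses it ``in a more general form'' without proof, while you justify it via the saddle--node analysis; your discussion of why hitting the caustic really takes the reference point out of $R(K,p)$ (at that instant the moving minimum is absorbed into the degenerate point, so the count of nondegenerate stable points genuinely differs from $S$) is, if anything, more explicit than the paper's. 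Your route buys a shorter, more self-contained argument with a slightly finer local conclusion (every arc between consecutive equilibria has length at least $\rin(K,p)\perim K$); the paper's route isolates the extremal characterization in a purely discrete lemma, which makes the rigidity statement cleaner.

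Two details you should patch. First, $\bd K$ may have corners at \emph{non-equilibrium} points in the interior of an arc, where $\nu(t)$ jumps and the Frenet computation breaks down; this only helps, since at the first such corner the point $p(t_0)=\gamma(t_0)+d_i\nu^-$ already lies on a one-sided inner normal at a nonsmooth point of $\bd K$, i.e.\ on the generalized caustic, so the path terminates there with $\dist\le t_0\le a_i$. Second, in the equality case tightness forces $\kappa\equiv 0$ on every arc, and then no $u_i$ can be a smooth point, since a smooth nondegenerate unstable point requires $\kappa(u_i)\,|u_i-p|>1$; so the nonsmooth adaptation you sketch at the end is not a side case but the only case that occurs at the extremum, exactly as it should be, since the extremal body is the regular polygon.
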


Similarly like in Section~\ref{sec:external}, we may reformulate Theorem~\ref{thm:internal} in terms of $\rin_S$.

\begin{cor}
For any $S \geq 3$, we have
\[
\rin_S=\frac{1}{2S},
\]
and the plane convex bodies $K \in \{ S \}$ with maximal internal robustness with respect to their centers of gravity
are the regular $S$-gons.
\end{cor}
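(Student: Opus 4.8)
The plan is to recast internal robustness as a distance-to-caustic problem and then bound that distance by decomposing $\bd K$ into the $2S$ arcs cut off by the equilibria. I would fix $p$ at the origin and describe $\bd K$ by its support function $h(\theta)$ relative to $p$, writing the boundary point with outer normal $n(\theta)=(\cos\theta,\sin\theta)$ as $\gamma(\theta)=h(\theta)n(\theta)+h'(\theta)t(\theta)$, where $t(\theta)=(-\sin\theta,\cos\theta)$. The equilibria with respect to $p$ are exactly the critical points of $h$ (the $S$ stable ones being its local minima and the $S$ unstable ones its local maxima), so $h$ has $2S$ critical points alternating around the circle. Moving the reference point to $p+v$ replaces $h$ by $h_v(\theta)=h(\theta)-\langle v,n(\theta)\rangle$, and the stable count can change only when $h_v$ acquires a degenerate critical point. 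A short computation ($h_v'=h'-\langle v,t\rangle$, $h_v''=h''+\langle v,n\rangle$) shows that $h_v$ has a degenerate critical point at $\theta_0$ if and only if
\[
v=h'(\theta_0)\,t(\theta_0)-h''(\theta_0)\,n(\theta_0),
\]
which is precisely the vector from $p$ to the center of curvature of $\bd K$ at $\gamma(\theta_0)$, of length $\sqrt{h'(\theta_0)^2+h''(\theta_0)^2}$. Hence, in the smooth case, $\rin(K,p)\cdot\perim K=\dist(p,\text{caustic})=\min_\theta\sqrt{h'^2+h''^2}$, since inside the ball of that radius no degeneracy occurs and the count stays $S$; in general the task reduces to bounding by $\perim K/(2S)$ the smallest displacement producing a degenerate equilibrium.

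Next I would prove a per-arc estimate: on the arc from a stable point $s_i$ to an adjacent unstable point $u_i$, the minimal destroying displacement is at most the arc length $L_i$. At nondegenerate equilibria $h''>0$ at $s_i$ and $h''<0$ at $u_i$, so there is a first $\theta^\ast$ with $h''(\theta^\ast)=0$; there $\sqrt{h'^2+h''^2}=|h'(\theta^\ast)|$, and since $0\le h''\le R=h+h''$ on $[\theta_{s_i},\theta^\ast]$, writing the arc-length element as $\mathrm{d}s=R\,\mathrm{d}\theta$,
\[
|h'(\theta^\ast)|=\int_{\theta_{s_i}}^{\theta^\ast}h''\,\mathrm{d}\theta=\int_{\theta_{s_i}}^{\theta^\ast}\frac{h''}{R}\,\mathrm{d}s\le\int_{\theta_{s_i}}^{\theta^\ast}\mathrm{d}s\le L_i .
\]
Thus $\dist(p,\text{caustic})<L_i$ for every $i$, strictly, because $h>0$ forces $h''<R$. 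Since the $2S$ arcs satisfy $\sum_i L_i=\perim K$, the minimum obeys $\min_i L_i\le\perim K/(2S)$, giving $\rin(K,p)<1/(2S)$ for smooth bodies. In the polygonal case the only degeneracies come from the foot of the perpendicular from the reference point reaching a vertex; destroying the stable point on an edge then costs exactly the distance from that foot to the relevant vertex, which is the corresponding arc length, so the same averaging yields the bound, now with equality attainable.

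For the equality case, the strict inequality on every smooth sub-arc forces an extremizer to be a polygon whose $2S$ foot-to-vertex arcs are all equal and whose feet are the edge midpoints. Equal half-edges together with feet at midpoints make both endpoints of each edge equidistant from $p$, hence all vertices equidistant from $p$; so $p$ is the circumcenter and the edges are equal chords of a circle, i.e.\ $K$ is a regular $S$-gon centered at $p$, which I would check realizes $\rin=1/(2S)$.

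The hard part will be the unified per-arc bound for a general $K\in\K_2$ whose arcs mix smooth portions with non-equilibrium corners, where $h$ fails to be $C^2$ and the caustic estimate must be fused with the vertex mechanism inside a single arc; alongside this, one must verify that crossing the located degeneracy genuinely changes the stable count (a transversal fold versus a cusp of the caustic) and pin down where the hypothesis $S\ge 3$ enters, namely in excluding the degenerate two-arc configurations (flattening ellipses) that approach the bound without being polygons.
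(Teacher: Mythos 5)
Your route is genuinely different from the paper's. You decompose $\bd K$ into the $2S$ arcs between consecutive equilibria and bound the distance from $p$ to the degeneracy locus by each arc length via the support-function identity (the degeneracy point for the reference point is the center of curvature $h't-h''n$; at the first zero $\theta^*$ of $h''$ past a stable point this gives distance $|h'(\theta^*)|=\int h''\,d\theta \le \int (h+h'')\,d\theta = \text{arc length}$, strictly, since $\int h\,d\theta>0$), and then average. The paper instead inscribes the polygon $P=\conv\{u_1,\dots,u_S\}$ of the unstable points, proves that the disk $p+\rin(K,p)\B$ admits a \emph{strip cover} by the sides of $P$ (its Lemma~3, via a case analysis on caustics and one-sided inner normals), and closes with a discrete lemma: a strip cover by the sides of a unit-perimeter $S$-gon forces radius at most $1/(2S)$, with equality only for the regular $S$-gon about its center. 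Your smooth-case computation and your polygonal mechanism (foot of perpendicular crossing a vertex) are both correct, and for smooth strictly convex bodies your estimate is arguably cleaner than the paper's geometric case analysis.

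There is, however, a genuine gap, which you locate yourself but do not close: the supremum in the corollary runs over all of $\K_2$, i.e.\ over piecewise smooth bodies mixing curved arcs, straight segments and corners, and your proof covers only the two extreme cases. This is not a removable technicality here, for two reasons. First, the upper bound $\rin_S\le 1/(2S)$ is simply not established on most of the class; second, the uniqueness claim (only regular $S$-gons attain $1/(2S)$) requires \emph{strict} inequality for every non-polygonal body, which is precisely a statement about the mixed case. Moreover, the mixed case is exactly where your parametrization degenerates: at a straight boundary segment $h$ fails to be $C^1$ (the Gauss map collapses the segment to a single normal angle), a corner occupies a whole interval of normal angles, and the degeneracy mechanism at a corner is not a center of curvature but a one-sided inner normal half line --- this is condition~(2) in the paper's caustic characterization, and the strip-cover lemma is engineered to absorb it uniformly. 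The gap is fillable inside your framework: take $\theta^*$ to be the first normal angle past the stable point at which either $h''$ vanishes on a smooth piece or a corner's normal interval begins; in the latter case the point $h'(\theta^*)t(\theta^*)$ lies on that corner's inner normal half line, so the same integral bound applies. But as written, the proposal does not prove the corollary. Two smaller points: your polygonal equality analysis tacitly assumes every edge carries a stable foot and every vertex is an equilibrium, whereas an arc containing a non-equilibrium vertex gives a strictly smaller degeneracy distance and must be ruled out explicitly in the equality case; and the fact that reaching the degeneracy locus genuinely changes the stable count, which you flag as ``to verify,'' can simply be quoted from the caustic literature, as the paper does by citing Poston--Stewart.
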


In the proof we use Definition~\ref{defn:stripcover} and Lemmas~\ref{lem:discrete} and \ref{lem:inscribed}.

\begin{defn}\label{defn:stripcover}
Let $\F$ be a family of closed segments in $\Re^2$, and $A \subset \Re^2$ a set.
If for every $[a,b] \in F$ there is a closed infinite strip, containing $A$ and bounded by a pair of parallel lines $L_a$ and $L_b$ such that $a \in L_a$ and $b \in L_b$, then we say that \emph{$A$ admits a strip cover by the elements of $\F$}.
\end{defn}

\begin{lem}\label{lem:discrete}
Let $P \subset \Re^2$ be an $S$-gon of unit perimeter. If $B=q + \rho \B$ admits a strip cover by the sides of $P$,
then $\rho \leq \frac{1}{2S}$ with equality if, and only if, $P$ is a regular $S$-gon, and $q$ is its center.
\end{lem}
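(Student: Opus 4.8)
The plan is to reduce the strip-cover hypothesis to a single scalar inequality for each side of $P$, and then finish by an averaging argument. Write the vertices of $P$ as $v_1,\ldots,v_S$ in cyclic order, so that the sides are $[v_i,v_{i+1}]$ with lengths $\ell_i=|v_i-v_{i+1}|$ and $\sum_{i=1}^S \ell_i=1$. Fix a side $[v_i,v_{i+1}]$ and let $L_{v_i},L_{v_{i+1}}$ be the two parallel bounding lines of the strip guaranteed by Definition~\ref{defn:stripcover}, with common unit normal $u_i$. On the one hand, since this strip contains the disk $B=q+\rho\B$, its width is at least the width of $B$ in the direction $u_i$, namely $2\rho$. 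On the other hand, the width is the distance between two parallel lines through $v_i$ and $v_{i+1}$, which equals $|\langle v_i-v_{i+1},u_i\rangle|\le |v_i-v_{i+1}|=\ell_i$ by Cauchy--Schwarz. Combining the two estimates gives $2\rho\le \ell_i$ for every $i$.

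From here the bound is immediate: $2\rho\le \min_i \ell_i \le \frac1S\sum_{i=1}^S \ell_i=\frac1S$, since the minimum of the side lengths cannot exceed their average, whence $\rho\le \frac{1}{2S}$.

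The substantive part is the equality case, which I would handle by tracing back where the two inequalities can be tight. If $\rho=\frac{1}{2S}$, then $\min_i\ell_i$ equals the average, forcing $\ell_i=\frac1S$ for all $i$, and in addition both inequalities above must hold with equality for every $i$. Tightness of $|\langle v_i-v_{i+1},u_i\rangle|=\ell_i$ forces $u_i$ to be parallel to the side, while tightness $2\rho=\text{width}$ forces $B$ to be inscribed in the strip, tangent to both lines, so that $q$ lies on the strip's mid-line. I would then check that these two conditions together say precisely that the foot of the perpendicular from $q$ to the line of the $i$-th side is its midpoint $\tfrac12(v_i+v_{i+1})$; expanding $\langle q-\tfrac12(v_i+v_{i+1}),\,v_{i+1}-v_i\rangle=0$ shows this is equivalent to $|q-v_i|=|q-v_{i+1}|$. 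Running this equality around the cycle of sides yields $|q-v_i|$ constant in $i$, so $q$ is the circumcenter and $P$ is inscribed in a circle about $q$; equal chords subtend equal central angles, which must then all equal $\frac{2\pi}{S}$, so $P$ is a regular $S$-gon centered at $q$. The converse is a direct check: for the regular $S$-gon centered at $q$, choosing each $u_i$ along the corresponding side produces a strip of width $\ell=\frac1S=2\rho$ which, by symmetry, contains the disk of radius $\frac{1}{2S}$ about $q$.

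I expect the only genuine obstacle to be this equality analysis, specifically the passage from the tangency/inscribedness of $B$ in each strip to the clean reformulation $|q-v_i|=|q-v_{i+1}|$, together with the observation that these conditions propagate around the polygon to pin down the circumcenter; the upper bound itself needs nothing beyond the width estimate and averaging.
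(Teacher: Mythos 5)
Your proof is correct and follows essentially the same route as the paper's: the same width estimate (strip width at most the side length, at least $2\rho$), the same averaging step (shortest side at most $\frac{1}{S}$ of the perimeter), and the same equality analysis forcing each strip to be perpendicular to its side and circumscribed about $B$, placing $q$ on every perpendicular bisector and hence at the circumcenter of an equilateral, cyclic, therefore regular, $S$-gon. The only cosmetic differences are your explicit use of Cauchy--Schwarz for the width bound and your explicit verification of the converse, which the paper leaves implicit.
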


\begin{lem}\label{lem:inscribed}
Let $K \in \K_2$ have $S \geq 3$ stable points with respect to $p \in \inter K$.
Let $P$ denote the convex hull of the unstable points of $K$.
Then $B=p+\rin(K,p) \B$ admits a strip cover by the sides of $P$.
\end{lem}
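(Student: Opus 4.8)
The plan is to work with the actual displacement $d := \rin(K,p)\,\perim K = \min\{\,|q-p| : q \notin R(K,p)\,\}$ and to reduce the statement to one separate claim per side of $P$. After rescaling so that $\perim K = 1$ (everything in sight is scale invariant), the disk $B = p + \rin(K,p)\B = p + d\B$ is exactly the largest disk about $p$ contained in $\cl R(K,p)$; in particular, for every $q$ in the interior of $B$ the body $K$ still has $S$ stable points with respect to $q$. Since $K$ has only nondegenerate equilibria, the equilibrium points vary continuously with the reference point and keep their alternating cyclic order, so for each $q \in \inter B$ there is a stable point on the arc of $\bd K$ strictly between the two unstable points that deform continuously from the endpoints $u_i,u_{i+1}$ of a fixed side $[u_i,u_{i+1}]$ of $P$.

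Fix such a side. I want to produce a strip containing $B$ whose two boundary lines pass through $u_i$ and $u_{i+1}$. Taking these boundary lines to be support lines of $B$, this amounts to finding a unit vector $\hat w$ with
\[
\langle u_i - p,\hat w\rangle \ge d \qquad\text{and}\qquad \langle u_{i+1} - p,\hat w\rangle \le -d,
\]
i.e.\ to placing $p$ strictly between two parallel lines through $u_i$ and $u_{i+1}$, each at distance at least $d$ from $p$. The natural first guess is the slab perpendicular to the chord, $\hat w = \widehat{u_{i+1}-u_i}$; this is exactly the persistence region of the foot of the perpendicular in the polygonal case, and for the regular $S$-gon centred at $p$ it is tight, both projections equalling $d = \tfrac1{2S}$. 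For general $K \in \K_2$ I expect to need the extra freedom in the direction $\hat w$.

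To pin $\hat w$ down I will use that moving the reference point along the inner normal toward $u_i$ (respectively toward $u_{i+1}$) by the least amount making the pair $s_i,u_i$ (respectively $s_i,u_{i+1}$) degenerate yields a reference point in $\complement R(K,p)$, hence at distance at least $d$ from $p$; in the smooth case this degenerate reference point is the relevant centre of curvature, and in the nonsmooth case it is controlled by the normal cone at the vertex. Feeding these two constraints into the displayed inequalities reduces feasibility of the strip to
\[
\pi - \angle(u_i,p,u_{i+1}) \le \arccos\frac{d}{|u_i-p|} + \arccos\frac{d}{|u_{i+1}-p|},
\]
i.e.\ to the statement that the two arcs of admissible directions $\hat w$ (around $\widehat{u_i-p}$ and around $\widehat{p-u_{i+1}}$) overlap. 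This holds with equality for the regular $S$-gon, where $|u_i-p|$ is the circumradius and each side equals $\sin\frac\pi S$, which will also drive the characterisation of the equality case.

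The main obstacle is precisely this feasibility inequality: it is the only place where the hypothesis that $u_i,u_{i+1}$ are \emph{unstable} (so $p$ lies beyond their centres of curvature, forcing $|u_i-p|,|u_{i+1}-p|$ to be large relative to $d$) and that no merging occurs for displacements smaller than $d$ must be used quantitatively, and it must be proved uniformly for smooth arcs (via the evolute of $\bd K$) and for nonsmooth vertices (via normal cones). Once it is in hand, choosing such a $\hat w$ for every side $[u_i,u_{i+1}]$ gives a strip through that side containing $B$, hence a strip cover of $B$ by the sides of $P$, proving the lemma; the equality cases accumulated along the way match the regular $S$-gon exactly, as needed for Theorem~\ref{thm:internal}.
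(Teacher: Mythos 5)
There is a genuine gap here, and you have in fact located it yourself: your ``feasibility inequality'' is not a reduction of the lemma but a restatement of it. For a fixed side $[u_i,u_{i+1}]$, the existence of a unit vector $\hat w$ with $\langle u_i-p,\hat w\rangle\ge d$ and $\langle u_{i+1}-p,\hat w\rangle\le -d$ is, by elementary geometry of arcs on the unit circle, \emph{exactly equivalent} to the overlap condition $\pi-\angle(u_i,p,u_{i+1})\le \arccos\frac{d}{|u_i-p|}+\arccos\frac{d}{|u_{i+1}-p|}$, so passing from one to the other carries no content; all of the lemma is concentrated in that inequality, which you explicitly leave unproved (``the main obstacle''). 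Moreover, the route you hint at for proving it --- constraints obtained by sliding the reference point along the inner normals at $u_i$ and $u_{i+1}$ until some pair of equilibria degenerates --- cannot suffice, because it uses only data at the two endpoints of the side. The paper's proof must use the entire boundary arcs between consecutive equilibria: it first upgrades the definition of $\rin(K,p)$ to the statement that the open disk of radius $\rin(K,p)$ about $p$ contains no center of curvature at any smooth point of $\bd K$ and meets no one-sided inner normal half line at any nonsmooth point, and then, for each candidate strip line, runs a connectivity (intermediate-value) argument on the evolute: the caustic of the arc of $\bd K$ from $u_i$ to $s_i$ is a continuous curve joining the center of curvature at $u_i$ (a point of $\relint[u_i,p]$) to the center of curvature at $s_i$ (a point beyond $p$ on the line through $p$ and $s_i$), so if it avoided the disk tangent to the candidate line, both endpoints would lie in one component of a suitable region minus that disk, a contradiction; separate care is needed where the curvature vanishes on the arc (the evolute escapes to infinity) and at nonsmooth points. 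None of this global information enters your two endpoint constraints, and for general $K\in\K_2$ endpoint data alone do not imply your inequality.

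There is also a second, independent gap: your per-side argument tacitly assumes each strip can be certified by information attached to that side, but this fails when $p\notin\inter P$. The paper isolates this as its Case 3 (namely $\alpha_i+\beta_i\ge\pi$, which can hold for at most one side): there, the strip for the exceptional side is not produced by any caustic estimate along that side, but is assembled from the strips already constructed for the two \emph{adjacent} sides, via the angle comparisons $\phi_i,\phi_{i+1}\le\frac{\pi}{2}$. Your proposal contains no mechanism for this case, and the observation that your inequality is tight for the regular $S$-gon does not substitute for one. In summary: the setup and the target inequality are correct, but the analytic and topological core of the lemma --- precisely where nondegeneracy, the unstability of $u_i,u_{i+1}$, and the caustic characterization of $\rin(K,p)$ are used --- is missing.
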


First, we prove Theorem~\ref{thm:internal}, and then the two lemmas.

\begin{proof}[Proof of Theorem~\ref{thm:internal}]
Let $K \in \K_2$ be of unit perimeter and let $p \in \inter K$.
Assume that $K$ has $S \geq 3$ stable points with respect to $p$.
Let $P$ denote the convex hull of the unstable points of $K$.
Since $P \subset K$, $\perim P \leq \perim K = 1$, with equality if, and only if, $P=K$.
Thus, applying Lemmas~\ref{lem:discrete} and \ref{lem:inscribed}, we immediately have $\rin(K,p) \leq \frac{1}{2S}$.
On the other hand, if $\rin(K,p)=\frac{1}{2S}$, then $\perim P = 1$, and by Lemma~\ref{lem:discrete},
$P$ is a regular $S$-gon, and $p$ is its center. From this, it readily follows that $P=K$ is a regular $S$-gon,
and $p$ is its center.
\end{proof}

\begin{proof}[Proof of Lemma~\ref{lem:discrete}]
Observe that, for any side $[a,b]$ of $P$, the width of any infinite strip, bounded by the lines $L_a$ and $L_b$ 
with $a \in L_a$ and $b \in L_b$, is at most $|b-a|$, and here we have equality if, and only if $L_a$ and $L_b$
are perpendicular to $[a,b]$.
Thus, $2\rho$ is not greater than the length of a shortest side of $P$.
Since $\perim P =1$, it readily implies the inequality $\rho \leq \frac{1}{2S}$.

Assume that $\rho = \frac{1}{2S}$. Then $P$ is an equilateral $S$-gon, and for any side $[a,b]$ of $P$,
the strip $\conv (L_a, L_b)$, containing $B$, is perpendicular to $[a,b]$ and is circumscribed about $B$.
Thus, the center $q$ of $B$ is on the bisector of the side.
Since this holds for the bisector of each side of $P$, it follows that $q$ is the center of the circle
circumscribed about $P$. Note that if each vertex of an equilateral polygon lies on the same circle,
then the polygon is regular, which immediately implies the assertion.
\end{proof}

\begin{proof}[Proof of Lemma~\ref{lem:inscribed}]
Let $K \in \K_2$ be of unit diameter with $S \geq 2$ stable points with respect to the origin $o$.
Recall that the \emph{caustic} of a smooth curve (also called \emph{evolute}, cf. \cite{DoCarmo})
is the locus of the centers of curvature of the curve.
It is well-known (cf. \cite{Poston}), that a plane convex body with smooth boundary has degenerate equilibria only with respect to a point of its \emph{caustic}, and that otherwise the number $2S$ of the equilibria of $K$ changes
(and in this case it changes by two) if, and only if, the reference point transversally crosses the caustic (in $3$-dimensional space one of the two caustics). We use this observation in the following, more general form:
$\rin(K,o)$ is the largest number $\rho > 0$ such that
\begin{enumerate}
\item $\rho \inter \B$ contains no center of curvature at any smooth point of $\bd K$,
\item $\rho \inter \B$ contains no point of any one-sided inner normal half line at any nonsmooth point of $\bd K$.
\end{enumerate}

We parametrize $\bd K$ as $t \mapsto \underline{r}(t)$, where $t \in [0,1]$.
For $i=1,2,\ldots,S$, we denote the stable and unstable points of $K$ by $u_i=\underline{r}(t_i')$ and $s_i=\underline{r}(t_i)$,
labelling them in such a way that $0 < t_1 < t_1' < \ldots < t_n < t_n' < 1$.
Let $P= \conv \{ u_1, u_2, \ldots, u_S\}$.

Clearly, for every $i$,
\begin{enumerate}
\item the tangent lines of $\bd K$ at $u_i=\underline{r}(t_i)$ and at $s_i=\underline{r}(t_i')$ are perpendicular
to the position vectors $u_i$ and $s_i$, respectively if $u_i$ is a smooth point of $\bd K$
(note that $s_i$ is a smooth point by definition), and
\item the angle between $u_i$ and any of the two one-sided tangent lines
is acute, if $u_i$ is not a smooth point. 
\end{enumerate}
Thus, the angles of the triangle $\conv \{ o, ,u_i, u_{i+1}\}$ at $u_i$ and $u_{i+1}$
are acute, from which it follows that each side of $P$ contains a stable point with respect to $o$.

Observe also that as $\bd K$ is piecewise smooth and $K$ has only nondegenerate equilibria,
$\langle \underline{r}(t), \dot{\underline{r}}(t) \rangle < 0$ at any smooth point with $t \in (t_i,t_i')$
(that is, $|\underline{r}(t)|$ strictly increases), and it is positive at any smooth point with
$t \in (t_i',t_{i+1})$ ($|\underline{r}(t)|$ decreases).
This observation holds also for nonsmooth points, if we replace $\dot{\underline{r}}(t)$ by any of the two one-sided
derivatives of $\underline{r}(t)$.

Let $\alpha_i = \angle(u_i,o,s_i)$, $\beta_i = \angle(s_i,o,u_{i+1})$ and $l_i = |u_{i+1} - u_i|$.
We distinguish three cases.

\emph{Case 1}, $\alpha_i < \frac{\pi}{2}$ and $\beta_i < \frac{\pi}{2}$.

For simplicity, we imagine the segment $[u_i,u_{i+1}]$ as `horizontal', and assume that $o$ is `below'
this segment (cf. Figure~\ref{fig:rob1}).
Let $L$ and $L'$ denote the two lines, parallel to $[o,s_i]$, that pass through the points
$u_i$ and $u_{i+1}$, respectively.
Let $B$ and $B'$ be the closed disks, with $o$ as their centers, that touch $L$ and $L'$, respectively.
We show that both disks contain a center of curvature, or intersect a one-sided inner normal half line
at a nonsmooth point of the arc $\underline{r}([t_i,t_{i+1}])$.
This clearly implies that $\rin(K,o) \B$ is contained 
in a strip between two parallel lines passing through $u_i$ and $u_{i+1}$, respectively.

\begin{figure}[here]
\includegraphics[width=0.5\textwidth]{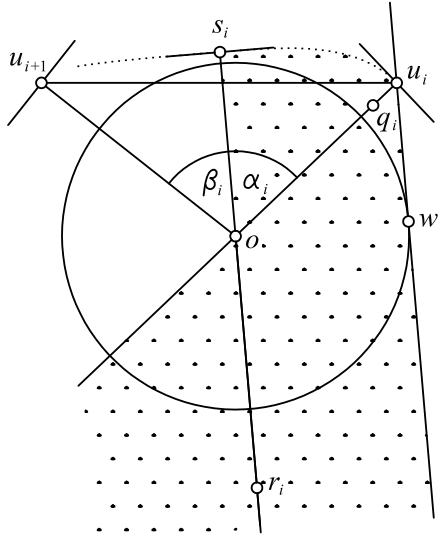}
\caption[]{The region in Case 1 of the proof of Lemma~\ref{lem:inscribed}}
\label{fig:rob1}
\end{figure}

We show only that $B$ contains a point of the caustic or a one-sided inner normal half line,
as for $B'$ we may repeat the same argument.

First, note that if $u_i$ is not a smooth point of $\bd K$, then the right-sided inner normal half line at $u_i$
intersects $B$. Thus, we may assume that $u_i$ is a smooth point of $\bd K$.

Let $w$ denote the tangent point of $B$ on $L$.
Observe that since $\alpha_i < \frac{\pi}{2}$, $u_i$ is upwards from $w$.
Let $D$ denote the region, containing $r_i$, that is bounded by the union of the following arcs:
\begin{itemize}
\item the closed half line in $L$, emanating from $u_i$ and containing $w$;
\item the points $\underline{r}(t)$ with $t \in [t_i,t_i']$;
\item the segment $[o,s_i]$;
\item the closed half ray in the line containing $[o,u_i]$, starting at $o$ and \emph{not} containing $u_i$.
\end{itemize}
This region is shown as a dotted domain in Figure~\ref{fig:rob1}.

Let $R_t$ denote the inner normal half line of $g_i=\underline{r}([t_i,t'_i])$ at $\underline{r}(t)$, if it exists.
Since $\langle \underline{r}(t), \dot{\underline{r}}(t) \rangle < 0$ for every $t \in (t_i,t_i')$,
we have that in this interval
$R_t \cap [o,s_i] = \emptyset$. Thus, $R_t \subset D$ for every $t \in [t_i,t_i']$.
Thus, the assertion readily follows if $g_i$ contains a nonsmooth point.

Now, consider the case that $g_i$ is a smooth curve, and let
$q_i$ and $r_i$ denote the centers of curvature at $u_i$ and $s_i$, respectively.
Observe that $q_i \in \relint [u_i,o]$ and $o \in \relint [s_i,r_i]$.
Furthermore, we have $q_i \in B$, or $r_i \in B$, or that $q_i$ and $r_i$ are not in the same
connected component of $D \setminus B$.
Since in the first two cases the assertion readily follows,
we may assume that $q_i$ and $r_i$ are not in the same connected component.

If the curvature of $\bd C$ is not zero at any point of $g_i$, then,
as in this case the caustic of $g_i$ is a continuous curve in $D$ that connects $q_i$ and $r_i$,
$B$ contains a center of curvature of $g_i$.
Consider the case that the curvature of  $g_i$ is zero at some point.
Let $\bar{t} \in [t_i,t_i']$ be the smallest value such that the curvature of $g_i$ is zero at $\underline{r}(\bar{t})$.
Then, the caustic of $\underline{r}([t_i,\bar{t}))$ is a continuous curve in $D$ that connects $u_i$ to a point
in the unbounded component of $D \setminus B$, and thus, contains a point in $B$.
This finishes the proof in Case 1.

\emph{Case 2}, $\alpha_i + \beta_i < \pi$, but one of the two angles is at least $\frac{\pi}{2}$.
Let, say, $\alpha_i \geq \frac{\pi}{2}$ (cf. Figure~\ref{fig:rob2}), which readily implies that $\beta_i < \frac{\pi}{2}$. Let $d$ denote the distance of $o$ and the line through $u_{i+1}$ and parallel to $[o,s_i]$.
This distance is the length of the segment $[u_{i+1},y]$, where $y$ is the orthogonal projection of $u_{i+1}$
on the line containing $[o,s_i]$. Let $x$ be the intersection point of $[u_i,u_{i+1}]$ and $[o,s_i]$.

\begin{figure}[here]
\includegraphics[width=0.5\textwidth]{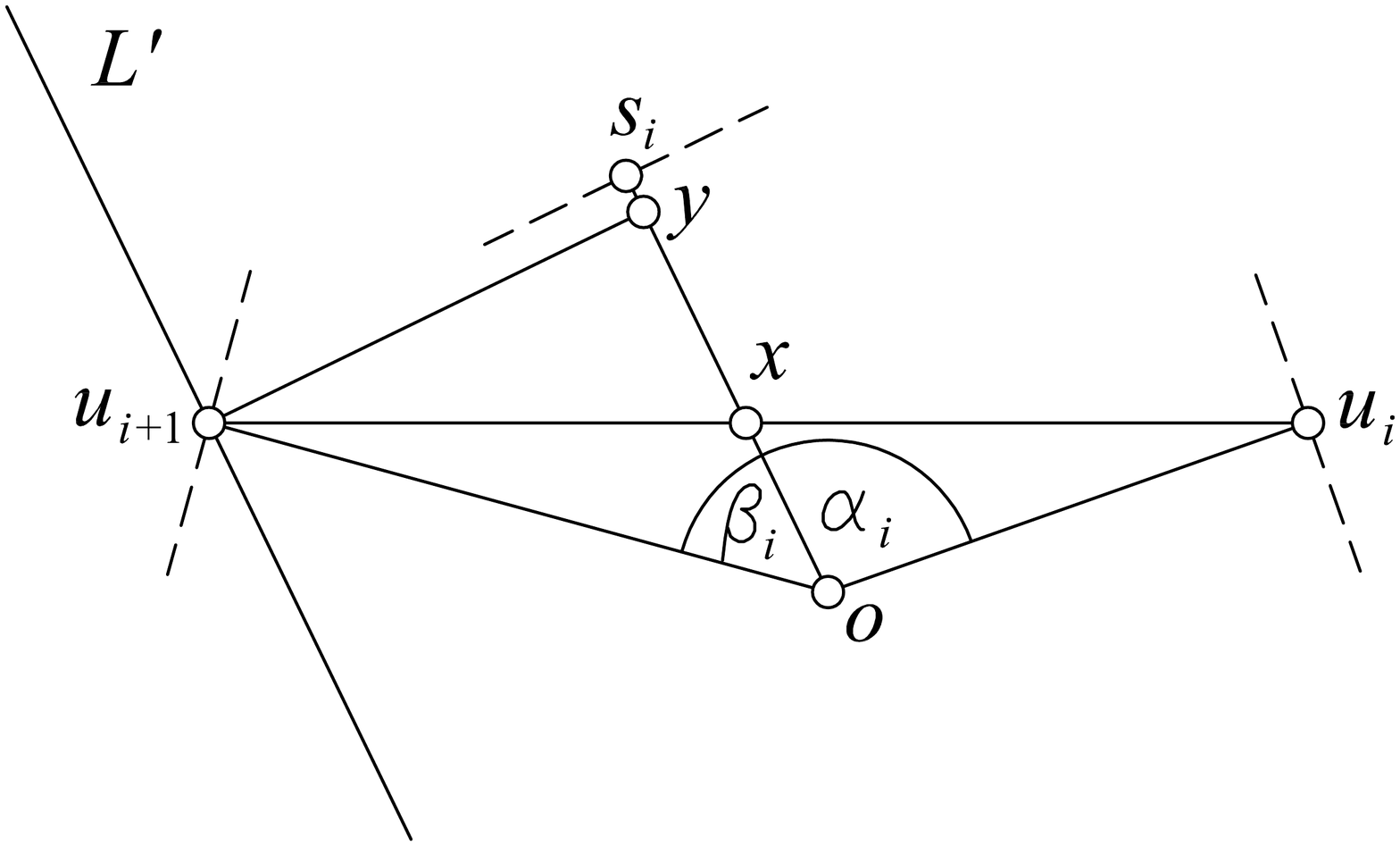}
\caption[]{An illustration for Case 2 of the proof of Lemma~\ref{lem:inscribed}}
\label{fig:rob2}
\end{figure}

Observe that by the argument in Case 1, we have that $d \geq \rin(K,o)$.
On the other hand, $\rin(K,o) \leq |u_i|$.
Indeed, if $u_i$ is a smooth point of $\bd K$, then $|u_i| \B$ contains the center of curvature at $u_i$,
and if $u_i$ is not a smooth point, then it intersects the right-hand side inner normal half line at $u_i$.

Consider the case that $|u_{i+1}-x| \leq |u_i-x|$ (cf. Figure~\ref{fig:rob2}).
Then, since $\rin(K,o) \leq d$, choosing $L$ and $L'$ parallel to $[o,s_i]$ and satisfying
$u_i \in L$, $u_{i+1} \in L'$, we have that $\rin(K,o) \B \subset \conv (L \cup L')$.

Now we examine the case that $|u_{i+1}-x| > |u_i-x|$.
Let $L$ and $L'$ be the lines perpendicular to $u_i$ such that $u_i \in L$ and $u_{i+1} \in L'$.
We show that $\rin(K,o) \B \subset \conv (L \cup L')$.
Let $z$ be the intersection point of $[u_i,u_{i+1}]$ with the line containing $o$ and parallel to $L$.
Observe that since $\alpha_i > \frac{\pi}{2}$, we have $z \in [x,u_i]$.
Thus, $|u_{i+1}-z| > |u_i-z|$, which yields that $|u_i| \B \subset \conv(L \cup L')$.
Since $|u_i| \geq \rin(K,o)$, it readily implies that $\rin(K,o) \subset \conv(L \cup L')$.

\emph{Case 3}, $\alpha_i + \beta_i \geq \pi$. From this condition, (using the terminology of Case 1)
it immediately follows that $o$ is `not below' the segment $[u_i,u_{i+1}]$, and thus, $o \notin \inter P$
(cf. Figure~\ref{fig:rob3}). Observe that the condition of Case 3 may hold for at most one value of $i$.
According to the previous cases, we have that for every $j \neq i$, $\rin(K,o) \B$ is contained 
in a strip bounded by two parallel lines passing through $u_j$ and $u_{j+1}$, respectively.
We may even observe that $p$ is contained in the halves of these strips, bounded by the corresponding sides of $P$,
that overlap $P$.

\begin{figure}[here]
\includegraphics[width=0.7\textwidth]{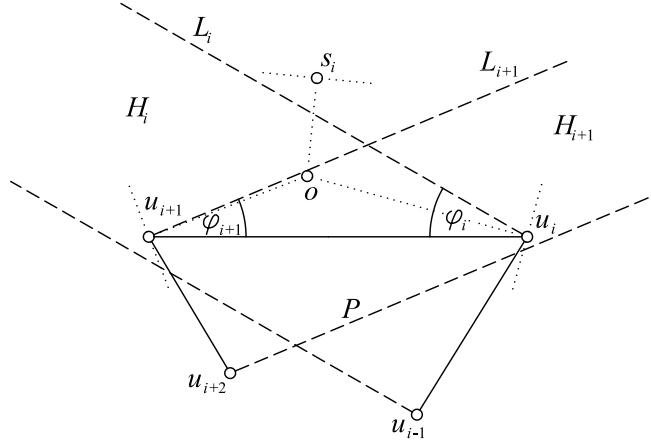}
\caption[]{An illustration for Case 3 of the proof of Lemma~\ref{lem:inscribed}}
\label{fig:rob3}
\end{figure}

We show that there is such a strip for $[u_i,u_{i+1}]$ as well.
Indeed, let $H_{i-1}$ denote the half of the infinite strip, belonging to $[u_{i-1},u_i]$,
that is bounded by $[u_{i-1},u_i]$ and contains $o$.
Define $H_{i+1}$ similarly for $[u_{i+1},u_{i+2}]$.
Let $L_i$ denote the ray in $\bd H_i$ starting at $u_i$, and $L_{i+1}$ denote the ray in $\bd H_{i+1}$
starting at $u_{i+1}$.

If $u_i \notin H_{i+1}$, then the strip, bounded by $L_{i+1}$ and its translate starting at $u_i$,
contains $H_{i+1}$, and thus also $\rin(K,o) \B^2$.
If $u_{i+1} \notin H_i$, we may apply a similar argument.
Thus, we may assume that $[u_i,u_{i+1}] \subset H_i \cap H_{i+1}$ (cf. Figure~\ref{fig:rob3}).
Let $\phi_i$ and $\phi_{i+1}$ denote the angle between $L_i$ and $[u_i,u_{i+1}]$, and the angle between $L_{i+1}$
and $[u_i,u_{i+1}]$, respectively.
Note that since $u_{i+1} \in H_i$, we have $\phi_i \leq \angle(u_{i-1},u_{i+1},u_i) \leq \angle(u_{i+2},u_{i+1},u_i) \leq \frac{\pi}{2}$.
We may obtain similarly that $\phi_{i+1} \leq \frac{\pi}{2}$.
Hence, the strip bounded by the lines perpendicular to $[u_i,u_{i+1}]$ and passing through its endpoints contains
$H_i \cap H_{i+1}$, and thus, also $\rin(K,o) \B^2$.
\end{proof}


\section{Internal robustness for 3-dimensional convex bodies}\label{sec:internal_3D}

In this section we partly generalize the results of Section~\ref{sec:internal} for convex polyhedra.
Our main result is as follows:

\begin{thm}\label{thm:platonic}
Let $P$ be a regular polyhedron with $S$ faces, $U$ vertices and $H=S+U-2$ edges, and let $o$ be the center of $P$.
Let $P'$ be a convex polyhedron with $S$ faces, $U$ vertices and $H$ edges, each containing an equilibrium point with respect to some $q \in \inter P'$.
Then
\[
\rin(P',q) \leq \rin(P,o),
\]
with equality if, and only if $P'$ is a similar copy of $P$, with $q$ as its center.
\end{thm}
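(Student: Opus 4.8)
The plan is to reduce $\rin(P',q)$ to an explicit geometric quantity attached to the faces, and then to bound that quantity face-by-face before comparing with the regular solid via a convexity argument. First I would establish the $3$-dimensional analogue of Lemma~\ref{lem:inscribed}: since $P'$ is a polytope, its stable equilibria are precisely the orthogonal projections $f_F(q)$ of $q$ onto the planes of those faces $F$ whose projection falls into the relative interior of $F$, and by hypothesis every face of $P'$ carries such a point. Moving the reference point by a vector $v$ translates $f_F(q)$ within the plane of $F$ by the orthogonal projection of $v$ onto that plane, a vector of length at most $|v|$; hence the smallest displacement that drives the equilibrium on $F$ across an edge (destroying a stable point) equals $\dist(f_F(q),\bd F)$, realised by moving $q$ in-plane towards the nearest edge. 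Since the number of stable points can change only when some foot crosses an edge, this yields
\[
\rin(P',q)=\frac{\min_F \dist\bigl(f_F(q),\bd F\bigr)}{\sqrt{\surf P'}}=:\frac{\rho^*}{\sqrt{\surf P'}}.
\]

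Next, writing $m_F$ for the number of edges of the face $F$, the relation $\dist(f_F(q),\bd F)\ge \rho^*$ means that $F$ contains the disk of radius $\rho^*$ centred at $f_F(q)$. Among convex $m_F$-gons containing a disk of radius $\rho^*$ the regular circumscribed polygon has least area, so $\area F\ge m_F(\rho^*)^2\tan\frac{\pi}{m_F}$; summing over the $S$ faces gives $\surf P'\ge (\rho^*)^2\sum_F m_F\tan\frac{\pi}{m_F}$ and therefore
\[
\rin(P',q)^2\le\frac{1}{\sum_F m_F\tan\frac{\pi}{m_F}}.
\]
Since each edge lies on two faces, $\sum_F m_F=2H$, so the faces of $P'$ have average edge-number $2H/S=m$, the common edge-number of the faces of $P$. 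The function $g(m)=m\tan\frac{\pi}{m}$ is strictly convex on $[3,\infty)$ (a direct computation of $g''$), whence by Jensen's inequality $\sum_F g(m_F)\ge S\,g(m)=S m\tan\frac{\pi}{m}$. The same chain of estimates applied to $P$, where every inequality is an equality (each face is a regular $m$-gon whose incircle of radius $\rho^*$ is centred at the foot of the perpendicular from $o$), shows $\rin(P,o)^2=\frac{1}{Sm\tan\frac{\pi}{m}}$, and hence $\rin(P',q)\le \rin(P,o)$.

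The main obstacle is the equality discussion. Equality forces $m_F=m$ for every face and, through the area estimate, forces each face to be a regular $m$-gon with incircle of radius exactly $\rho^*$ centred at $f_F(q)$; in particular all faces are congruent regular $m$-gons. To conclude that $P'$ is then a similar copy of $P$ I would invoke the rigidity of convex polyhedra with congruent regular faces: for $m=4$ and $m=5$ the hexahedron with square faces and the dodecahedron with regular pentagonal faces are unique, while for $m=3$ the prescribed data $S,U,H$ single out the relevant convex deltahedron (the tetrahedron, octahedron or icosahedron), each unique by the classification of convex deltahedra, equivalently by Cauchy's rigidity theorem. Finally, once $P'$ is known to be a regular solid, the condition that $f_F(q)$ is the incentre of every face forces $q$ to be its centre. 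Making this rigidity step precise — and verifying that the hypotheses on $U$ and $H$ genuinely exclude the non-regular competitors — is where the real work lies, and is presumably the source of the ``additional constraints'' under which the theorem is stated.
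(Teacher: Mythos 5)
Your proof of the inequality is, in substance, the paper's own proof. The paper normalizes $\surf(P')=\surf(P)=1$, bounds $\rin(P',q)$ above by the minimum of the inradii $r_F$ of the faces, and then invokes Dowker's theorem (Theorem~\ref{thm:Dowker}): among $n$-gons containing a fixed circle the regular circumscribed one has least area, and the areas $a_n=n\tan\frac{\pi}{n}$ form a strictly convex sequence, which the paper exploits through an edge-count-preserving replacement argument (swapping a regular $(n-k)$-gon and a regular $(n+k)$-gon for two regular $n$-gons). Your face-by-face bound $\area F\ge m_F(\rho^*)^2\tan\frac{\pi}{m_F}$ together with Jensen's inequality for $g(m)=m\tan\frac{\pi}{m}$ under the constraint $\sum_F m_F=2H$ is exactly this argument in continuous-variable form. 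You are actually more careful than the paper on two points: you prove a polytope analogue of Lemma~\ref{lem:inscribed}, namely $\rin(P',q)\sqrt{\surf P'}=\min_F\dist(f_F(q),\bd F)$, where the paper only asserts $\rin(P',q)\le\min_F r_F$ as ``clear''; and you verify convexity of $g$ directly rather than citing the discrete convexity in Dowker's theorem. Both steps are sound; note that only the ``$\le$'' half of your identity is needed for the inequality, while the ``$=$'' half (which you use to evaluate $\rin(P,o)$) tacitly relies on the same convention as the paper, namely that the reference point may be moved across $\bd P'$ without this by itself altering the count of stable points --- for the regular tetrahedron the face inradius exceeds $\dist(o,\bd P)$, so this convention genuinely matters, for the paper's proof just as for yours.

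The equality case is where you diverge from the paper, and where your argument is incomplete by your own account. Your reduction --- all faces are congruent regular $m$-gons whose incircles of radius $\rho^*$ are centred at the feet $f_F(q)$ --- agrees with the paper's, but you then appeal to the classification of convex deltahedra, the uniqueness of the cube and dodecahedron, and Cauchy rigidity (where, as you note, the required combinatorial equivalence is not given and would itself have to be established; the face/vertex/edge counts in the hypothesis do rescue this via the deltahedron classification, but that is imported machinery). The paper finishes elementarily from data you already possess: since every foot $f_F(q)$ is the incentre of its face, the lines orthogonal to the faces through their incentres are concurrent at $q$; for two faces $F,F'$ sharing an edge, that edge is tangent to both incircles at in-plane distance $\rho^*$, so by Pythagoras the distances from $q$ to the planes of $F$ and $F'$ coincide; by connectivity $q$ is equidistant from all face planes, hence all dihedral angles are equal, and a convex polyhedron with congruent regular faces and equal dihedral angles has congruent vertex figures and is therefore regular, with $q$ its centre. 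Substituting this short argument for your rigidity appeal closes the gap you flagged and makes your proof complete, at no extra cost.
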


\begin{rem}\label{rem:notall}
Clearly, we do not need to require formally that the numbers of faces, edges and vertices, stable, saddle and unstable points of $P'$ are equal to the corresponding quantities of $P$. We leave it to the reader to show that
if, say, $P'$ has $S$ stable points, $U$ unstable points and $H$ edges, then it has $S$ faces, $U$ vertices and $H$ saddle points.
\end{rem}

In the proof we use the following theorem of Dowker (cf. \cite{FTL}).

\begin{thm}[Dowker]\label{thm:Dowker}
Let $C$ be a unit circle, and $G$ be an $n$-gon circumscribed about (equivalently, containing) $C$.
Then:
\begin{itemize}
\item $\area(G)$ is minimal if, and only if, $G$ is a regular $n$-gon circumscribed about $C$,
\item denoting by $a_n$ the area of a regular $n$-gon circumscribed about $C$, the sequence $\{ a_n \}$
is strictly convex; namely, for any $n -2 > k > 0$, $a_{n-k} + a_{n+k} > 2a_n$. 
\end{itemize}
\end{thm}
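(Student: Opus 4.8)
The plan is to reduce both bullets to one-variable convexity statements after an explicit parametrization of circumscribed polygons. First I would set up coordinates with $C$ the unit circle centered at $o$, and reduce the area problem to genuinely \emph{circumscribed} $n$-gons: if an $n$-gon merely contains $C$ but some side fails to touch $C$, sliding that side inward keeps $C$ inside and strictly lowers the area, so the minimum over $n$-gons containing $C$ is attained by one all of whose sides are tangent to $C$. For a circumscribed $n$-gon let $p_1,\ldots,p_n$ be the points of tangency in cyclic order, let $v_i$ be the vertex where the tangents at $p_i$ and $p_{i+1}$ meet, and put $\beta_i=\tfrac12\angle(p_i,o,p_{i+1})$. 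Convexity forces $\beta_i\in(0,\tfrac{\pi}{2})$, and the tangent points partition the circle, so $\sum_{i=1}^n 2\beta_i=2\pi$, i.e. $\sum_i\beta_i=\pi$. Each kite $\conv\{o,p_i,v_i,p_{i+1}\}$ splits into two congruent right triangles with legs $1$ and $\tan\beta_i$ (the tangent is perpendicular to the radius and $v_i$ lies on the angle bisector), so its area is $\tan\beta_i$, giving the key formula $\area(G)=\sum_{i=1}^n\tan\beta_i$.

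For the first bullet I would minimize $\sum_i\tan\beta_i$ over the open simplex $\{\beta_i\in(0,\tfrac{\pi}{2}):\sum_i\beta_i=\pi\}$. Since $\tan$ is strictly convex on $(0,\tfrac{\pi}{2})$ and $\tfrac{\pi}{n}\in(0,\tfrac{\pi}{2})$ for $n\ge 3$, Jensen's inequality gives $\sum_i\tan\beta_i\ge n\tan\tfrac{\pi}{n}$ with equality if and only if all $\beta_i$ equal $\tfrac{\pi}{n}$. This is exactly the regular circumscribed $n$-gon, and strict convexity yields uniqueness; hence $\area(G)$ is minimal precisely for the regular $n$-gon, with minimal value $a_n=n\tan\tfrac{\pi}{n}$.

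For the second bullet I would upgrade the discrete convexity of $\{a_n\}$ to convexity of the real function $g(x)=x\tan\tfrac{\pi}{x}$ on $(2,\infty)$, since then the midpoint inequality for a strictly convex function gives, for $0<k<n-2$, that $\tfrac12\big(a_{n-k}+a_{n+k}\big)=\tfrac12\big(g(n-k)+g(n+k)\big)>g(n)=a_n$, where every argument $n\pm k$ lies in $(2,\infty)$ because $n-k>2$. Writing $t=\tfrac{\pi}{x}\in(0,\tfrac{\pi}{2})$ and differentiating twice (using $\tfrac{dt}{dx}=-\tfrac{t^2}{\pi}$) the expression collapses to $g''(x)=\tfrac{2\pi^2}{x^3}\sec^2\!\tfrac{\pi}{x}\,\tan\tfrac{\pi}{x}$, which is strictly positive on $(2,\infty)$ since there $\tfrac{\pi}{x}\in(0,\tfrac{\pi}{2})$. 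This establishes strict convexity of $g$, and with it the claimed inequality.

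I expect the only real obstacle to be the second bullet: one must verify that the second-derivative computation genuinely simplifies to a manifestly positive expression, and that the real-variable convexity argument covers exactly the stated range $0<k<n-2$ (equivalently, that all evaluation points stay in $x>2$, where $\sec^2\tfrac{\pi}{x}\tan\tfrac{\pi}{x}>0$). The first bullet, by contrast, is a direct application of Jensen once the area identity $\area(G)=\sum_i\tan\beta_i$ is in place, so the bulk of the care goes into the parametrization and the sign of $g''$.
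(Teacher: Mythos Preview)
The paper does not supply its own proof of Theorem~\ref{thm:Dowker}; it is quoted as a classical result of Dowker with a reference to Fejes T\'oth's \emph{Regular Figures}. So there is no ``paper's proof'' to compare against.

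Your argument is correct and self-contained. The reduction from $n$-gons containing $C$ to genuinely circumscribed ones is sound; the area identity $\area(G)=\sum_i\tan\beta_i$ with $\sum_i\beta_i=\pi$ and each $\beta_i\in(0,\tfrac{\pi}{2})$ is the standard parametrization, and Jensen with the strict convexity of $\tan$ on $(0,\tfrac{\pi}{2})$ gives the first bullet together with the equality characterization. For the second bullet your computation of $g''$ is right: differentiating $g(x)=x\tan(\pi/x)$ twice, the first-order terms cancel and one is left with
\[
g''(x)=\frac{2\pi^2}{x^3}\sec^2\!\frac{\pi}{x}\,\tan\frac{\pi}{x},
\]
which is strictly positive for $x>2$; the condition $0<k<n-2$ ensures $n-k>2$, so both $n\pm k$ lie in the domain of strict convexity and the midpoint inequality applies. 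This is essentially the textbook route to Dowker's theorem in the circumscribed case (the original Dowker argument is more combinatorial and works for arbitrary convex $C$, not only circles, but for the unit circle your analytic proof is the natural one and fully adequate here).
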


\begin{proof}[Proof of Theorem~\ref{thm:platonic}]
For simplicity, assume that $\surf(P') = \surf(P) = 1$, and let $R$ denote the radius of a circle
inscribed in a face of $P$.
Furthermore, for any face $F$ of $P'$, let $r_F$ denote the inradius of $F$, and let $\F$ denote the family of
the faces of $P'$.
Clearly, $\rex(P',q) \leq r = \min \{ r_F : F \in \F \}$.

By Theorem~\ref{thm:Dowker}, we have that $r \leq R$, with equality if, and only if,
all the faces of $P$ contain a stable point, and the faces of $P$ are congruent regular polygons.
We  note that since replacing an $n$-gon by a regular $n$-gon, and replacing a regular $(n-k)$-gon and
a regular $(n+k)$-gon by two regular $n$-gons does not change the total number of edges, we have that,
in the case that $r=R$, the faces of $P'$ are congruent to the faces of $P$ as well.

On the other hand, $\rin(P',q) = r$ implies also that all the lines,
orthogonal to a face and passing through its incenter, meet at $q$.
Furthermore, if the faces $F$ and $F'$ are joined by an edge, then, clearly, the distances of $q$ from the centers of $F$ and $F'$ are equal. Thus, $q$ is at the same distance from any face, which means that the face angles between any two faces joined by an edge are equal. From this, we obtain that the vertex figures of $P$ are congruent, which yields that $P'$ is a regular polyhedron. Hence, the assertion readily follows.
\end{proof}

The following example shows that Theorem~\ref{thm:platonic} is false
without the condition that the numbers of the edges of $P$ and $P'$ are equal.

\begin{example}\label{ex:counterexample}
Let $P$ be a regular tetrahedron of unit surface area with center $o$. Truncate $P$ near a vertex, in such a way that does not change the numbers of the three types of equilibria of $P$, and the truncated part does not intersect the incircle of any face of $P$, and denote the truncated polyhedron by $P'$.
Then $P'$ has the same numbers of stable, saddle and unstable points with respect to any point of $\inter (\rin(P,o) \B)$, but $\surf(P') < \surf (P) = 1$.
Thus, $\rin(P',o) > \rin(P,o)$.
\end{example}

\section{Full robustness, and its variants, in 3 dimensions}\label{sec:full}
In this section, first, we show that the full robustness of equilibrium classes $\{S,U\}$ with $S,U<3$ is maximal, and then we apply our method for some other types of robustness.

\begin{thm}\label{thm:1221}
We have $\rho_{12}=\rho_{21}=\rho_{22}=1$.
\end{thm}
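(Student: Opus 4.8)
The plan is to show that for each of the three classes $\{1,2\}$, $\{2,1\}$ and $\{2,2\}$, one can find bodies $K$ whose full robustness $\rho(K)$ is arbitrarily close to $1$, so that the supremum $\rho_{S,U}$ equals $1$. Since $\rho(K)\le 1$ always holds trivially (the truncated volume cannot exceed $\vol(K)$), the entire content is the lower bound: I must exhibit, in each class, convex solids for which \emph{every} truncation decreasing the relevant equilibrium count removes almost the whole volume. Equivalently, I want bodies where any convex subset $K'$ that lies in a different (lower) equilibrium class must be tiny compared to $K$.

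The key structural idea I would exploit is that the classes with $S<3$ or $U<3$ are extremal: a body in $\{1,2\}$ has only one stable point, a body in $\{2,1\}$ has only one unstable point, and to leave the class one must drop to something with $S=0$ or $U=0$, which is geometrically impossible for a genuine convex solid (every convex body has at least one stable and one unstable equilibrium). So the only way to change $N$ downward within the admissible range is to force a degeneration, and I would argue that a well-chosen $K$ makes this degeneration require cutting away essentially all of the mass. Concretely, I would take a slender or elongated family of bodies $K_\varepsilon$ in each class and show that any subbody $K'$ that has fewer stable points (or fewer unstable points) must avoid a region carrying volume fraction tending to $1$. This is the same truncation-and-alternation bookkeeping used in the proof of Theorem~\ref{thm:external}: if $K'$ retains points in each of the alternating ``high-distance'' zones around the equilibria, then the distance function on $\bd K'$ still has the full complement of critical points, a contradiction; hence $K'$ must miss an entire zone, and I design $K_\varepsilon$ so that each such zone is almost all of $K_\varepsilon$.

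The main obstacle, which is exactly the ``coupling'' difficulty flagged in the introduction, is that for the full robustness the reference point is the center of gravity $G$, which \emph{moves} when we truncate. Thus I cannot fix the reference point and argue purely about $\bd K$: cutting a piece off both reshapes the boundary and shifts $G$, and the two effects interact through the defining integrals. My plan to handle this is to choose the family $K_\varepsilon$ so that the equilibrium structure is rigid under the allowed small truncations — for instance using bodies whose single stable (resp.\ unstable) equilibrium is enforced by the gross shape rather than by delicate placement of $G$ — and then to track the displacement of $G$ under truncation and verify that removing any small-volume piece cannot create the extra stable or unstable point needed to enter a lower class. The estimate I expect to need is a bound showing that to kill the lone stable or unstable equilibrium, $G$ must be moved across a caustic by a distance achievable only by an enormous truncation, which is precisely where the $S,U<3$ hypothesis does the work, since with so few equilibria the obstructing body is unavoidable unless almost all mass is removed.

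I would organize the write-up by treating $\{2,1\}$ and $\{1,2\}$ first (they are dual, so one argument with the roles of stable and unstable interchanged should cover both), and then $\{2,2\}$ as a combined case. In each I would produce an explicit family $K_\varepsilon$, compute or bound $\rho(K_\varepsilon)$ from below by $1-o(1)$, and conclude $\rho_{S,U}=1$. The one step I am least sure will be clean is the center-of-gravity displacement bound; if a direct estimate proves awkward, the fallback is to pick $K_\varepsilon$ with enough symmetry that $G$ stays essentially fixed under the symmetric truncations that matter, reducing the full problem locally to the decoupled external-robustness computation already available from Theorem~\ref{thm:external}.
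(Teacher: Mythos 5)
Your overall plan---produce slender families in each class and show their robustness tends to $1$---is also the paper's plan, but your proposal rests on a wrong structural claim and is missing the one idea that makes the coupled problem tractable. The wrong claim: leaving $\{1,2\}$ or $\{2,1\}$ downward does \emph{not} require dropping to $S=0$ or $U=0$. Since $H=S+U-2$, reducing $N$ from $\{1,2\}$ means passing to $\{1,1\}$ (one stable point, one unstable point, no saddle), a non-empty class of perfectly nondegenerate bodies; similarly $\{2,1\}$ drops to $\{1,1\}$, and $\{2,2\}$ can drop to $\{1,2\}$, $\{2,1\}$ or $\{1,1\}$. So the truncations you must exclude are not ``forced degenerations'' at all, and the portion of your argument built on that premise never engages with the truncations that actually threaten the bound.

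The second gap is the coupling itself, which you flag but do not close: after truncation the reference point is the center of gravity $G'$ of the truncated body. The alternation bookkeeping of Theorem~\ref{thm:external} requires a \emph{fixed} reference point (and is, moreover, a planar statement, so it would need a 3-dimensional analogue); meanwhile, removing relative volume $\delta$ near one end of a body of length $c$ displaces the center of gravity by roughly $\delta c$, which is not small on the transverse scale of a slender body, so a displacement estimate does not reduce the full problem to the external one. Your symmetry fallback fails for a more basic reason: the truncation is adversarial, so you cannot restrict attention to symmetric truncations. The paper resolves the coupling with an intrinsic constraint on the truncated body that makes no reference to the original one (Lemma~\ref{lemma:prop}): for any convex body the center of gravity lies at distance at least one quarter of the width of a bounding box from each of its faces, whence a bounding box with edge lengths $a\le b\le c$ and $6b\le c$ forces at least two unstable points, and $3a<b$ forces at least two stable points, always with respect to the body's \emph{own} center of gravity. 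Contrapositively, any convex subset lying in $\{1,1\}$, or having $S\le 1$ or $U\le 1$, must fit inside a ``stocky'' box; inside the explicit slender bodies $K_{ij}(\lambda)$ of Lemma~\ref{lem:lambda}, which occupy a volume fraction at least $\mu>0$ of their elongated bounding boxes, such a stocky box captures a volume fraction tending to $0$ as $\lambda\to\infty$. This single lemma is what decouples the problem, and without it (or an equivalent intrinsic constraint on bodies in the lower classes) your argument cannot be completed.
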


We start by introducing a definition and proving some lemmas.

\begin{defn}
For a convex body $K$ in $\Re^3$, a \emph{bounding box} of $K$ is a brick circumscribed about $K$.
\end{defn}

\begin{lem}\label{lemma:prop}
If for some bounding box of a convex body $K \subset \Re^3$ with edge lengths $a \leq b \leq c$
\begin{itemize}
\item we have $6b \leq c$, then $K$ has at least two unstable points with respect to its center of gravity, and if
\item we have $3a < b$, then $K$ has at least two stable points.
\end{itemize}
\end{lem}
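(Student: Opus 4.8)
The plan is to prove the two implications separately, in each case using a bounding box to force a lower bound on either the number of unstable or the number of stable points with respect to the center of gravity $G$. The key idea is that a very \emph{elongated} body (one whose longest edge $c$ dwarfs $b$) cannot balance stably in too few ways along its long axis, and dually that a body whose shortest edge $a$ is much smaller than $b$ must be \emph{flat} and hence support many stable resting positions. The mechanism in both cases is a pigeonhole/geometry estimate locating $G$ inside the box and then analyzing the distance function from $G$ to $\bd K$ along the relevant direction.

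For the first bullet, suppose the bounding box has edge lengths $a\le b\le c$ with $6b\le c$, and set up coordinates so that the $z$-axis is along the longest edge. Since $K$ touches all six faces of its circumscribed box, the body spans the full length $c$ in the $z$-direction while its cross-sections perpendicular to $z$ have diameter at most $\sqrt{a^2+b^2}\le b\sqrt2$. I would first locate the center of gravity $G$ and argue that it lies in the interior; the point is that $\bd K$ has exactly two ``polar caps'' near the two ends of the long axis, and the distance function $z\mapsto|z-G|$ restricted to $\bd K$ must attain a local maximum on each of the two extreme regions. Concretely, I would show that the two faces of the box normal to the $z$-axis are contact faces, that the farthest boundary points in the $+z$ and $-z$ directions are each at distance comparable to $c/2$ from $G$, and that these distances genuinely exceed the radial distance to any ``equatorial'' boundary point (here the hypothesis $6b\le c$ supplies the needed slack, since the equatorial radius is $O(b)$ while the polar distance is $\Omega(c)$). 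This produces two distinct local maxima of the radial distance function, i.e. two unstable points, as required.

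For the second bullet, suppose $3a<b$; now the body is flat, being squeezed between the two box faces normal to the shortest direction, which I take to be the $x$-axis. Here the strategy is dual: I would produce two distinct \emph{local minima} of the distance function $z\mapsto|z-G|$ on $\bd K$, hence at least two stable points. The two large faces of the box (normal to the $x$-axis, separated by only $a$) are contact faces, and $K$ occupies the full width $b$ in the $y$-direction. Projecting $G$ onto the plane containing the wider extent and comparing the nearly-flat large-face boundary pieces to the thin rim lets one locate two well-separated candidate nearest points on the two broad sides; the inequality $3a<b$ guarantees the flatness is pronounced enough that the nearest boundary point in the thin direction beats the boundary points around the rim, and that this happens on both of the two broad faces independently, yielding two stable equilibria.

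The main obstacle, and the place I expect to spend the most care, is controlling the \emph{location} of $G$ inside the bounding box. Nothing a priori forbids $G$ from sitting close to one of the box faces, and the above comparisons implicitly assume $G$ is far enough from the extreme boundary regions that both of the two claimed local extrema actually occur. The clean way around this is to use that $G$, being a volume-weighted centroid of a convex body inscribed in the box, cannot lie too near any face: a standard estimate (e.g. via the fact that for a convex body the centroid divides any chord through it in a ratio bounded by the dimension) confines $G$ to a concentric sub-box, which then leaves room on \emph{both} ends (respectively both broad sides) for the two extrema. Pinning down the precise constants $6$ and $3$ is exactly the content of the numerology in the hypotheses; I would verify that these constants are comfortable enough to absorb both the worst-case position of $G$ and the worst-case cross-sectional spread of $K$, rather than trying to optimize them.
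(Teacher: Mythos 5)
Your first bullet follows the paper's proof in all essentials: the paper pins down $G$ by a cone-comparison argument showing that the centroid lies at distance at least one quarter of the corresponding width from every face of the bounding box (this is the precise form of the ``standard estimate'' you invoke), and then compares the points of $K$ on the two faces perpendicular to the longest edge, at distance at least $c/4$ from $G$, with \emph{all} points of the box in the central plane $z=0$, at distance at most $\tfrac34\sqrt{a^2+b^2}\le\tfrac{\sqrt2}{8}c<\tfrac c4$. This yields a local maximum of the distance function on each side, and it is sound precisely because the upper bound on the ``equatorial'' points uses only the box, not the shape of $K$.

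The second bullet, however, contains a genuine gap, and it is not the one you worry about (the location of $G$, which the quarter-width estimate settles). You split $\bd K$ by the plane through $G$ perpendicular to the \emph{shortest box edge} and claim that the nearest boundary point on each broad side beats every boundary point around the rim. Nothing forces the thin direction of $K$ to be parallel to the shortest edge of its bounding box, and when it is not, this comparison is false. Concretely, take a thin rectangular slab of dimensions $a\times\delta\sqrt2\times M\sqrt2$ with $\delta$ tiny and $M$ large, and rotate it by $45^\circ$ about its edge of length $a$: its axis-aligned bounding box has edge lengths $a\le b=c=M+\delta$, so $3a<b$ holds, yet the two boundary points closest to the centroid are the feet of the perpendiculars onto the two large faces, namely $(0,\pm\delta/2,\pm\delta/2)$, which lie \emph{in} your dividing plane $x=0$ at distance $\delta/\sqrt2\ll a$. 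Hence the minimum of the distance function over each closed half $\bd K\cap\{\pm x\ge 0\}$ is attained on the dividing set, your comparison fails, and the argument certifies no local minimum at all (the lemma is still true for this body, but for reasons your proof cannot see). The asymmetry with the first bullet is essential: for maxima one needs an \emph{upper} bound on distances along the dividing set, which the box supplies; for minima one needs a \emph{lower} bound, which the box cannot supply, since the dividing plane passes through $G$. A correct completion has to work with the body's own minimal-width direction, or argue via disconnectedness of a suitable sublevel set of the distance function, and this is genuinely more than a formal dualization; to be fair, the paper itself dispatches this case with the single sentence ``we may apply a similar argument,'' so the burden of supplying these details is real.
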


\begin{proof}
Let $B$ be a bounding box of $K$ with edge lengths $a \leq b \leq c$.

First, we show that the distance of the center of gravity of $K$ from any face of $K$ is at least
one quarter of the width of $B$ in that direction. Consider two parallel faces of $B$, say those at the distance $c$ from each other. For simplicity, we assume that the center of gravity of $K$ is the origin $o$,
and these two faces are in the planes $z=z_1 > 0$, and $z=-z_2 < 0$ for some $z_1, z_2 \in \Re$ with $z_1 + z_2 = c$.

Let $K_0$ be the intersection of $K$ with the $(x,y)$-plane and let $p_1$ be a point of $K$ in the plane $z=z_1$.
Let $C$ be the infinite cone with $p$ as its apex and $K_0$ as its base, and let $K'$ be the intersection of $C$
with the infinite strip bounded by the planes $z=z_1$ and $z = -z_2$.
Then, by the definition of the center of gravity of $K$, we have
\[
0 = \int_{(x,y,z) \in K} z \dif x \dif y \dif z \geq \int_{(x,y,z) \in K'} z \dif x \dif y \dif z = z_1 - \frac{3}{4}c.
\]
From this $z_2 \geq \frac{c}{4}$ (and similarly $z_1 \geq \frac{c}{4}$) readily follows.

Now, assume that for the edge lengths of $B$ we have $6a \leq 6b \leq c$.
Observe that for any $q \in \relbd K_0$, we have $|q| \leq \frac{3}{4} \sqrt{a^2+b^2} \leq \frac{\sqrt{2}}{8} c$.
On the other hand, for any point $p \in K$ in $z=z_1$ or in $z=-z_2$, we have
$|p| \geq \frac{c}{4}$.
Since $\frac{c}{4} > \frac{\sqrt{2}}{8} c$, it means that the Euclidean distance function has at least two local maxima,
one with a positive and one with a negative $z$-coordinate.

If $3a < b$, then we may apply a similar argument.
\end{proof}

\begin{lem}\label{lem:lambda}
There is some $\mu > 0$ such that for any $\lambda \in \Re$, there are convex bodies $K_{12}(\lambda) \in \{ 1,2\}$,
$\K_{21}(\lambda) \in \{2,1\}$ and $\K_{22}(\lambda) \in \{2,2\}$, and their bounding boxes $B_{ij}(\lambda)$ with edge lengths $a_{ij}(\lambda) \leq b_{ij}(\lambda) \leq c_{ij}(\lambda)$, where $ij \in \{ 12,21,22 \}$, such that
\begin{itemize}
\item $\lambda  < \frac{c_{12}(\lambda)}{b_{12}(\lambda)}$ and
$\frac{\vol(K_{12}(\lambda))}{\vol(B_{12}(\lambda))} \geq \mu$,
\item $\lambda < \frac{b_{21}(\lambda)}{a_{21}(\lambda)}$ and
$\frac{\vol(K_{21}(\lambda))}{\vol(B_{21}(\lambda))} \geq \mu$, and
\item $\lambda < \frac{c_{22}(\lambda)}{b_{22}(\lambda)}$, $\lambda < \frac{b_{21}(\lambda)}{a_{21}(\lambda)}$
and $\frac{\vol(K_{22}(\lambda))}{\vol(B_{22}(\lambda))} \geq \mu$.
\end{itemize}
\end{lem}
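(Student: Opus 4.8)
The plan is to produce, for each of the three classes and each target ratio $\lambda$, one explicit body whose bounding box and volume fraction can be read off directly, so that a single constant $\mu$ serves all three families and all $\lambda$. The cleanest anchor is the class $\{2,2\}$, which is realized by triaxial ellipsoids: for a homogeneous ellipsoid with semi-axes $\alpha<\beta<\gamma$ the center of gravity is the center, and the critical points of the distance function on the boundary are exactly the six semi-axis endpoints, the two ends of the $\alpha$-axis being minima (stable), the two ends of the $\gamma$-axis maxima (unstable), and the two ends of the $\beta$-axis saddles. Thus every such ellipsoid lies in $\{2,2\}$, its bounding box is $2\alpha\times 2\beta\times 2\gamma$, and its volume fraction is the scale-free constant $\frac{\pi}{6}$. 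Taking $\alpha=1$, $\beta=2\lambda$, $\gamma=4\lambda^2$ gives $c_{22}/b_{22}=b_{22}/a_{22}=2\lambda>\lambda$, settling the third bullet with room to spare.

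For $\{1,2\}$ and $\{2,1\}$ I would start from an elongated ellipsoid and remove \emph{exactly one} stable--saddle pair, respectively one unstable--saddle pair, through a saddle--node annihilation realized by an oblique planar truncation, so that convexity is preserved. For $\{1,2\}$ I would keep $\beta/\alpha$ bounded but $\gamma/\beta>\lambda$ and shave a cap near one end of the short ($\alpha$-) axis with a plane tilted so that the new facet's outward normal points away from $G$; the former stable minimum and an adjacent saddle disappear, leaving $1$ stable, $2$ unstable and $1$ saddle. For $\{2,1\}$ I would take $\beta/\alpha>\lambda$ (a flat body) and instead truncate obliquely near one end of the long ($\gamma$-) axis, destroying one of the two maxima together with a neighbouring saddle, leaving $2$ stable, $1$ unstable and $1$ saddle. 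In both cases the cut is confined to a region small compared with the box, so the box, and hence the aspect ratios, are unchanged, and the body still contains a fixed proportion of its box; replacing $\frac{\pi}{6}$ by a smaller fixed $\mu$ absorbs the removed caps uniformly in $\lambda$.

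The deformation from ellipsoid to truncated body must then be monitored. I would verify, by a local computation near the cut together with the global constraint $S-H+U=2$, that precisely one stable--saddle (resp.\ unstable--saddle) pair is annihilated and that no new equilibria appear elsewhere; in particular the oblique tilt is chosen so that $G$ projects outside the fresh facet, whence resting on it is not an equilibrium. The genuinely delicate point, and the one I expect to be the main obstacle, is that the relevant reference point is the body's own center of gravity, which the truncation displaces. One must relocate $G$ after the cut and confirm that the count with respect to the \emph{new} $G$ is still the desired one, and that this persists uniformly as $\lambda\to\infty$. This is exactly the coupling between $\bd K$ and $G$ stressed in the introduction. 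I would control it by keeping the truncation a bounded, scale-controlled modification, so that the displacement of $G$ is small relative to the dimension being exploited, and then invoke Lemma~\ref{lemma:prop}: the dominant aspect ratio forces the correct number of extremal (stable or unstable) equilibria at the untouched end of the body, while the local analysis near the cut pins down the remaining ones. Establishing this uniform persistence, rather than the mere existence of a representative, is the part that will require the most care.
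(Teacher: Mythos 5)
Your ellipsoid for the class $\{2,2\}$ is correct, and it is in fact exactly the example the paper uses (semi-axes $1$, $2\lambda$, $4\lambda^2$; scale-free volume fraction $\pi/6$). The gap lies in your constructions for $\{1,2\}$ and $\{2,1\}$.

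For $\{1,2\}$ you shave a cap \emph{near one end of the short axis}, tilted so that $G$ projects outside the fresh facet, and claim that ``the former stable minimum and an adjacent saddle disappear.'' They cannot: the saddles of the ellipsoid sit at the ends of the \emph{middle} axis, far from any cap that is, as you insist, ``confined to a region small compared with the box,'' so both saddles survive the cut (as do all other smooth equilibria, since a small cut displaces $G$ only slightly and they are nondegenerate). Now run the Poincar\'e--Hopf count on the truncated body: the untouched smooth part carries index $1-2+2=1$, the facet carries $0$ by your choice of tilt, so the rim of the cut must carry total index $+1$. But a nondegenerate equilibrium on the rim --- a point $q$ with $q-G$ in the relative interior of the two-dimensional normal cone spanned by the facet normal and the surface normal --- is never stable: moving off the rim into either smooth piece strictly decreases $|x-G|$, so such a point is either a local maximum (unstable, index $+1$) or a saddle (index $-1$). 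Hence the rim necessarily carries an unstable equilibrium, and your body lands in class $\{1,3\}$, not $\{1,2\}$. The same mechanism defeats your $\{2,1\}$ construction: an oblique cut localized near one end of the long axis merely converts the smooth maximum into a rim maximum, and the body stays in $\{2,2\}$ (or becomes $\{3,1\}$ if $G$ projects inside the facet). A planar cut can remove a stable (resp.\ unstable) point only by \emph{engulfing an adjacent saddle as well}; that forces a cap whose size is comparable to the two shorter (resp.\ two longer) semi-axes, contradicting your ``small cut, box unchanged'' claims, and even then the real work remains: showing that neither the facet nor the rim carries any equilibrium with respect to the \emph{relocated} center of gravity, uniformly in $\lambda$. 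Your proposed tools cannot certify this: the relation $S-H+U=2$ does not distinguish $\{1,2\}$ (where $H=1$) from $\{1,3\}$ or $\{2,2\}$ (where $H=2$), and Lemma~\ref{lemma:prop} yields only \emph{lower} bounds on $S$ or $U$ under aspect-ratio hypotheses, never exact counts or the absence of saddle and rim equilibria.

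The paper avoids all of this by not truncating ellipsoids: for $\{1,2\}$ it takes a suitably truncated circular cylinder, and for $\{2,1\}$ the convex hull of two slightly tilted, originally coincident discs; the equilibrium classes are read off directly from these explicit shapes, which are then smoothed by the algorithm of \cite{DLS2} (a step you would also need, since a truncated ellipsoid lies in neither $\O_3$ nor $\P_3$), after which the volume ratios are an elementary computation. If you wish to keep the truncated-ellipsoid route, points (i) engulfing a saddle and (ii) verifying the absence of facet and rim equilibria with respect to the new $G$ are the substance of the proof, not details to be deferred.
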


\begin{figure}[!ht]
\begin{center}
\includegraphics[width=0.6\textwidth]{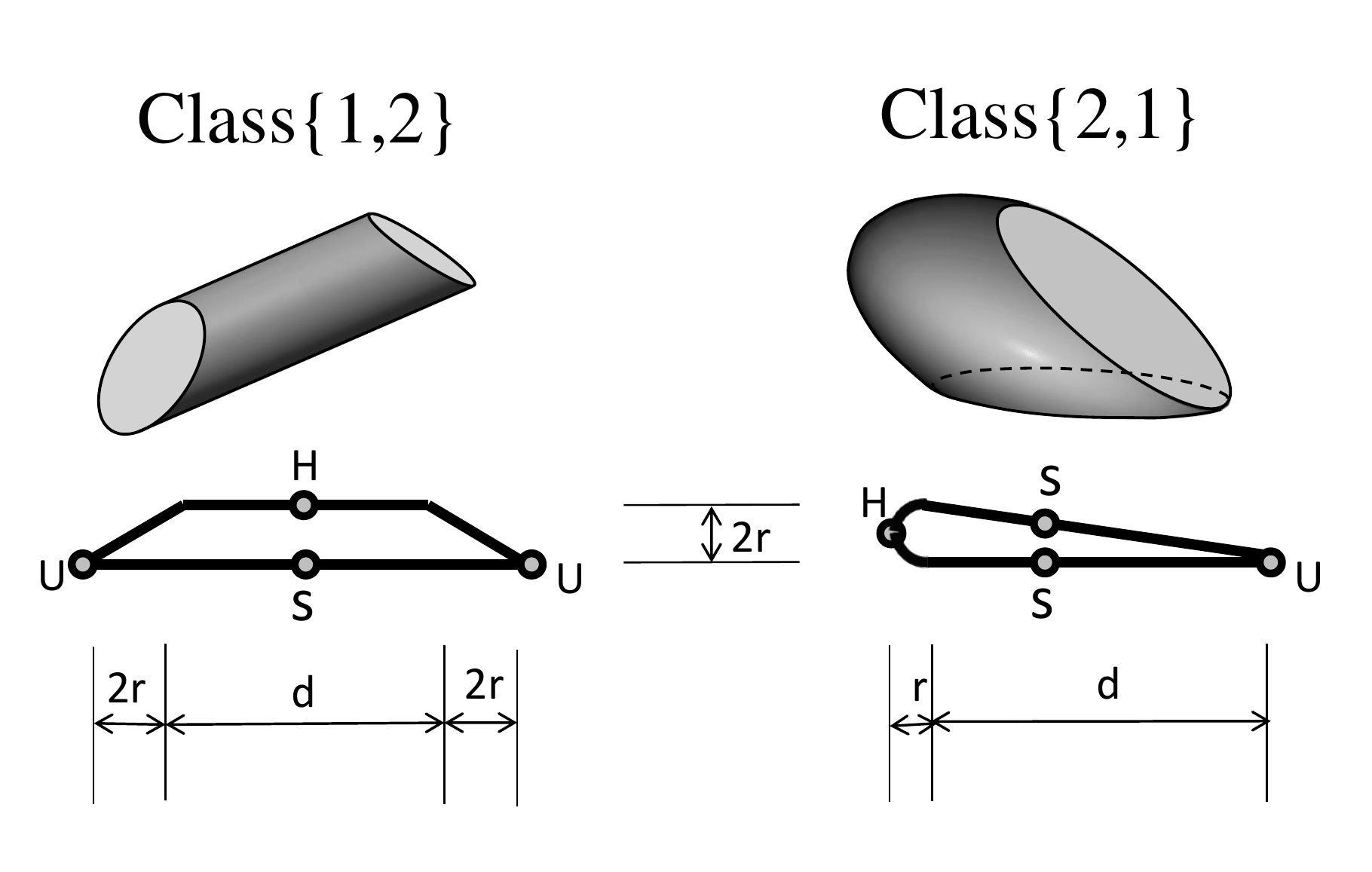}
\end{center}
\caption{Examples of convex bodies in classes $\{1,2\}$ and $\{2,1\}$.}\label{fig:1221}
\end{figure}

\begin{proof}
Figure \ref{fig:1221} shows two convex bodies in $\{1,2\}$ and $\{2,1\}$ with bounding boxes
of edge lengths $a=b=2r$ and $c=d+4r$ and $a=2r$ and $c>b>d$, respectively. The body in $\{1,2\}$ is a suitably truncated
cylinder. We construct the body in $\{2,1\}$ by tilting two, originally coincident circular discs by a small angle. The section
of the two tilted discs with the plane containing their centers are two straight line segments and we connect those
by tangentially attached circular arcs. We remark that the convex hull of a Dupin `needle' Cyclide appears to be in the
same equilibrium class.

Let $\bar{K}_{12}(\lambda)$ and $\bar{K}_{21}(\lambda)$ be these bodies, respectively, with some $d>2r\lambda$.
Then the edge lengths of the given bounding boxes of the bodies satisfy the conditions of the lemma.
To obtain the bodies $K_{12}(\lambda)$ and $K_{21}(\lambda)$ with smooth boundaries, we may apply the standard smoothing algorithm, described, for instance, in \cite{DLS2}.
Then it is an elementary computation to show that the limits of $\frac{\vol(K_{12}(\lambda))}{\vol(B_{12}(\lambda))}$
and $\frac{\vol(K_{21}(\lambda))}{\vol(B_{21}(\lambda))}$ are strictly greater than zero.

As $K_{21}(\lambda)$, we may simply select an ellipsoid with semi-axes $a = 1$, $b=2\lambda $, $c=4 \lambda^2$.
\end{proof}
\noindent Now we return to proving Theorem \ref{thm:1221}.

\begin{proof}
We start with the obvious remark that if any equilibrium class $\{S,U\}$
contains, for every $\varepsilon > 0$, a convex body with downward robustness at least $1-\varepsilon$,
then $\rho_{S,U}=1.$
Hence, using the notation of Lemma~\ref{lem:lambda}, we need only show that
\[
\lim_{\lambda \to \infty} \rho(K_{12}(\lambda)) = \lim_{\lambda \to \infty} \rho(K_{21}(\lambda)) =\lim_{\lambda \to \infty} \rho(K_{22}(\lambda)) = 1.
\]

We start with the class $\{1,2\}$. The only way to reduce the number of
the equilibrium points of $K_{12}(\lambda)$ is to truncate the body in such a way
that the remaining object is in class $\{1,1\}$.
For brevity, we let $K_{11}(\lambda)$ denote a convex body obtained in this way
by a truncation of ``almost'' minimal relative volume (note that a truncation of minimal relative volume may yield
a body with degenerate equilibrium points).
Observe that at least one bounding box of $K_{11}(\lambda)$ fits inside the bounding box of $K_{12}(\lambda)$
described in Lemma~\ref{lem:lambda}.
Thus, by Lemma \ref{lemma:prop} and as $\frac{\vol(K_{12}(\lambda))}{\vol(B_{12}(\lambda))} \geq \mu$
for every value of $\lambda$, we have
\[
\lim_{\lambda \to \infty} \rho(K_{12}(\lambda)) = 1- \lim_{\lambda \to \infty} \frac{\vol(K_{11}(\lambda))}{\vol(K_{12}(\lambda))} = 1.
\]
The argument for the class $\{2,1\}$ runs in an analogous manner.

In case of the class $\{2,2\}$, there are three alternative ways to produce
a convex body with less than six equilibrium points: we may obtain 
a truncated body in one of the classes $\{ 1,2 \}$, $\{2,1 \}$ or $\{ 2,2 \}$.
By Lemma \ref{lemma:prop}, for any bounding box of the truncation with edge lengths $a \leq b \leq c$, 
we have $6b > c$ or $3a \geq b$. Thus, applying an argument similar to the one in the previous cases,
we have $\lim_{\lambda \to \infty}\rho(K_{22}(\lambda)) = 1$.
\end{proof}


Until this point we treated stable and unstable points in a similar manner, robustness corresponded to truncation
resulting in the reduction of the number $N$ of equilibria, regardless of their type. It is natural, and, as we will see, also useful to ask for
\emph{partial robustness}, i.e. the (relative) volume of a truncation necessary to reduce \emph{either S or U}, (the numbers of stable and unstable points, respectively,)
resulting in the notion of $S$-robustness and $U$-robustness, denoted by $\rho^s,\rho^u$, respectively.
Naturally, we have
\[
\rho=\min\{\rho^s,\rho^u\}.
\]
We can define the $S$- and $U$-robustness of equilibrium classes, denoted by $\rho^s_{i,j},\rho^u_{i,j}$, respectively,
in a natural way, and we can immediately see that
\[
\rho^s_{1,n}=\rho^u_{n,1}=1,
\]
because it is not possible to further reduce the number of stable and unstable equilibria in these classes.
Beyond this trivial comment, with ideas very similar to the ones in the proof of Theorem~\ref{thm:1221},
it is easy to obtain one additional result about the partial robustness of two infinite families of equilibrium classes which we formulate in the following theorem.

\begin{thm}\label{thm:bistatic}
If $n>2$ then $\rho^s_{2,n}=\rho^u_{n,2}=1$.
\end{thm}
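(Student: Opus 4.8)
The plan is to mimic the proof of Theorem~\ref{thm:1221} by exhibiting, for each class $\{2,n\}$ with $n > 2$, a one-parameter family of convex bodies whose $S$-robustness tends to $1$ as the parameter grows, and symmetrically for $\{n,2\}$. By the same ``obvious remark'' used at the start of the proof of Theorem~\ref{thm:1221}, it suffices to construct, for every $\varepsilon > 0$, a body in $\{2,n\}$ whose $S$-robustness is at least $1-\varepsilon$. Since reducing $S$ from $2$ forces the truncated body into a class with a single stable point (i.e. $S=1$), the strategy is to build a highly elongated body in $\{2,n\}$ for which any truncation creating a monostable ($S=1$) body must remove almost all of the volume.

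First I would adapt Lemma~\ref{lemma:prop}: the relevant half of that lemma states that a bounding box with $3a < b$ forces at least two stable points. Turning this around, a body with $S=1$ cannot have a bounding box that is too ``thin'' in its two smaller dimensions, so a thin bounding box certifies $S \geq 2$. I would then seek bodies $K_{2,n}(\lambda)$ in $\{2,n\}$ whose bounding box has edge lengths $a \leq b \leq c$ with $b/a > \lambda$ (guaranteeing $S=2$ via the stable-point criterion) while simultaneously keeping the volume ratio $\vol(K_{2,n}(\lambda))/\vol(B_{2,n}(\lambda))$ bounded below by a fixed $\mu > 0$, exactly as in Lemma~\ref{lem:lambda}. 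The key point is that a truncation reaching $S=1$ must destroy the $3a < b$ condition, i.e. it must shrink the body so much in the $b$-direction that any bounding box of the residual body $K'$ satisfies $3a' \geq b'$; since $b \approx \lambda a$ is large, this forces the removal of a volume fraction tending to $1$. Combining this with the fixed lower bound $\mu$ on the body-to-box volume ratio yields $\lim_{\lambda \to \infty}\rho^s(K_{2,n}(\lambda)) = 1$, and hence $\rho^s_{2,n} = 1$.

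The one genuinely new ingredient over Theorem~\ref{thm:1221} is the requirement that the stable count be exactly $2$ while the \emph{unstable} count is an \emph{arbitrary} prescribed $n > 2$. I would handle this by starting from the elongated ``needle''-type body used for class $\{2,1\}$ in Lemma~\ref{lem:lambda} and decorating its surface with $n-1$ additional small unstable equilibria (sharp protrusions or shallow dimples localized in a region of negligible volume), placed so as to leave the two dominant stable points and the overall bounding-box proportions essentially unchanged; the Poincar\'e--Hopf relation $S - H + U = 2$ then automatically supplies the correct saddle count. By Remark~\ref{rem:notall}'s philosophy, only the equilibrium counts matter, and these local modifications can be made with arbitrarily small volume, preserving both $b/a > \lambda$ and the volume-ratio bound $\mu$. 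The statement for $\{n,2\}$ follows by the dual construction (interchanging the roles of stable and unstable points, and using instead the unstable-point criterion $6b \leq c$ of Lemma~\ref{lemma:prop}).

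The main obstacle I expect is the verification that a volume-negligible perturbation can realize exactly $n$ unstable points without perturbing the two stable points or inadvertently creating degenerate equilibria; in particular one must ensure that the additional unstable points, together with the saddles forced by Poincar\'e--Hopf, actually arrange themselves consistently on the elongated surface. I would address this by performing the modifications on the smooth body after the smoothing step (as in Lemma~\ref{lem:lambda}), so that nondegeneracy is generic and the equilibrium counts can be read off transversally, and by localizing each protrusion far from the stable caps so that the global distance function is altered only near the prescribed new critical points.
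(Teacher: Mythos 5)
Your proposal is correct, and its logical skeleton is exactly the paper's: reduce to exhibiting, for each class, a one-parameter family whose partial robustness tends to $1$; certify the class and the extreme aspect ratio via bounding boxes as in Lemma~\ref{lem:lambda}; then use the contrapositive of Lemma~\ref{lemma:prop} to force any $S$-reducing (resp.\ $U$-reducing) truncation into a ``fat'' bounding box, whose volume is a vanishing fraction of the body's as $\lambda \to \infty$. Where you genuinely diverge is the construction of the witness bodies. The paper takes bodies approaching a planar regular $n$-gon (a flattened, rounded plate) for $\{2,n\}$, and bodies approaching an infinite straight prism over a regular $n$-gon (a long rounded prism) for $\{n,2\}$; there the $n$ unstable (resp.\ $n$ stable) points are supplied automatically by the dihedral symmetry, so no local surgery or nondegeneracy verification is needed. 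You instead perform surgery on the $\{2,1\}$ and $\{1,2\}$ bodies of Lemma~\ref{lem:lambda}, grafting on $n-1$ extra equilibria by volume-negligible local modifications. This is workable and more flexible (it would let you adjust equilibrium counts on essentially any starting body), but it shifts the entire burden onto the local-modification machinery --- each graft must create exactly one new unstable point and one saddle, preserve the two stable points, the nondegeneracy, the aspect-ratio condition and the volume-ratio bound --- which is precisely the step you flag as the main obstacle; the paper's symmetric bodies make all of this immediate. One concrete correction to your construction: on a convex body you cannot create equilibria by ``shallow dimples,'' since denting destroys convexity. New unstable points must come from outward protrusions, and in the dual $\{n,2\}$ case the $n-1$ additional \emph{stable} points should be produced by small planar truncations (flat facets whose foot of perpendicular from the center of gravity lies in the facet's relative interior), not by dimples; with that substitution your dual construction goes through.
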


\begin{proof}
Let $K_{2n}$ and $K_{n2}$ be a regular, planar $n$-gon, and a straight, infinite prism with a regular $n$-gon
as its base.
We leave it to the reader to show for every $\lambda > 0$ the existence of the convex bodies
$K_{2n}(\lambda) \in \{2,n\}$ and $K_{n2}(\lambda) \in \{n,2 \}$ with some bounding boxes satisfying the conditions
in the first two parts of Lemma~\ref{lem:lambda}.
These convex bodies approach $K_{2n}$ and $K_{n2}$, respectively, as $\lambda \to \infty$.
Then, to prove the assertion, we may apply Lemma~\ref{lemma:prop} and follow the idea of Theorem~\ref{thm:1221}.
\end{proof}

Theorem~\ref{thm:bistatic} leads to the following Corollary.

\begin{cor} \label{cor:bistatic}
If $n>2$ then
$\rho_{2,n}=\rho^{u}_{2,n}$
$\rho_{n,2}=\rho^{s}_{n,2}.$
\end{cor}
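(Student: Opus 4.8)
The plan is to read Corollary~\ref{cor:bistatic} as an immediate consequence of Theorem~\ref{thm:bistatic} combined with the decomposition $\rho = \min\{\rho^s,\rho^u\}$. Recall that for a body $K$ in class $\{2,n\}$ we have $S=2$ stable points, so reducing $S$ below $2$ means landing in some class with a single stable point. First I would invoke the trivial observation already recorded just before Theorem~\ref{thm:bistatic}, namely $\rho^s_{1,m}=1$ and $\rho^u_{m,1}=1$, to pin down where the ``$S$-reducing'' truncations must go, and then use Theorem~\ref{thm:bistatic} to evaluate $\rho^s_{2,n}$ itself.

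Concretely, for the first identity I would argue as follows. By definition $\rho_{2,n} = \min\{\rho^s_{2,n},\rho^u_{2,n}\}$, where $\rho^s_{2,n}$ measures the minimal normalized truncation needed to decrease $S$ and $\rho^u_{2,n}$ the one needed to decrease $U$. Theorem~\ref{thm:bistatic} gives $\rho^s_{2,n}=1$ for $n>2$. Since robustness is a normalized volume and hence always lies in $[0,1]$, we have $\rho^u_{2,n}\leq 1 = \rho^s_{2,n}$, so the minimum is attained by the $U$-term, yielding $\rho_{2,n}=\min\{1,\rho^u_{2,n}\}=\rho^u_{2,n}$. The second identity $\rho_{n,2}=\rho^s_{n,2}$ is the mirror image: Theorem~\ref{thm:bistatic} gives $\rho^u_{n,2}=1$, and again $\rho^s_{n,2}\leq 1$ forces the minimum in $\rho_{n,2}=\min\{\rho^s_{n,2},\rho^u_{n,2}\}$ to be the $S$-term.

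I do not expect any genuine obstacle here, since the corollary is purely bookkeeping once Theorem~\ref{thm:bistatic} is in hand; the only point requiring a word of care is the elementary fact that every normalized robustness is bounded above by $1$ (so that a value of $1$ is automatically the maximum of any pair), which is immediate from the definitions because a truncation removes at most all of $K$. The one subtlety worth flagging explicitly is that the equality $\rho_{S,U}=\min\{\rho^s_{S,U},\rho^u_{S,U}\}$ should be understood at the level of the class invariants (suprema over the class), matching the way $\rho=\min\{\rho^s,\rho^u\}$ was stated for individual bodies; I would simply note that taking suprema respects this identity in these particular classes because the extremal families in Theorem~\ref{thm:bistatic} simultaneously realize $\rho^s=1$ (resp.\ $\rho^u=1$), so there is no gap between the pointwise and class-level statements.
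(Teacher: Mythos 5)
Your proposal is correct and coincides with the paper's own (implicit) argument: the corollary is stated there without any proof, as pure bookkeeping from Theorem~\ref{thm:bistatic}, the decomposition $\rho=\min\{\rho^s,\rho^u\}$, and the fact that every normalized robustness is at most $1$, which is exactly your computation $\rho_{2,n}=\min\{1,\rho^u_{2,n}\}=\rho^u_{2,n}$ and its mirror image. The sup-versus-min subtlety you flag (the class quantities are suprema, and $\sup_K \min\{\rho^s(K),\rho^u(K)\}$ need not a priori equal $\min\{\sup_K \rho^s(K),\sup_K \rho^u(K)\}$) is passed over in silence by the paper as well; note only that your proposed patch does not quite close it, since the families of Theorem~\ref{thm:bistatic} witness $\rho^s\to 1$ but are not shown to approach the supremum defining $\rho^u_{2,n}$, so on this point you are exactly as (non)rigorous as the authors.
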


\section{Remarks and open problems}\label{sec:summary}
In this paper we investigated the robustness of a convex solid with $N$ equilibrium points. 
Beyond the original definition of `full' robustness we also defined internal and external, in the latter case also upward and downward robustness. 

\subsection{Smoothness and structural stability}

Before formulating conjectures and raising questions we make some remarks about smoothness and structural stability.

\begin{rem}
We have examined convex bodies with piecewise $C^\infty$-class boundaries.
Nevertheless, all our results (and proofs) can be applied also for bodies with piecewise $C^2$-class boundaries.
\end{rem}

\begin{rem}\label{rem:ovalstability}
Let $S \geq 3$. For every $\varepsilon > 0$, there is a $\delta=\delta(\varepsilon,S) > 0$,
with $\lim_{\varepsilon \to 0+0} \delta = 0$, such that
if $K \in \K_2$ has $n$ stable points with respect to $p \in \inter K$, and $\rin(K,p)> \frac{1}{2S} - \varepsilon$,
then the Hausdorff distance of $K$ and a regular $S$-gon, with $p$ as its centre, is less than $\delta$.
\end{rem}

\begin{rem}
Remark~\ref{rem:ovalstability} holds also if we replace $\rin(K,p)$ with $\rex(K,p)$.
\end{rem}

\begin{rem}\label{rem:ovalsaregood}
For every $S \geq 3$, the maximum of $\rin(K,p)$ over $\K_2$ can be approached by regions with smooth boundaries as well; or in other words, $\frac{1}{2S}=\sup \{ \rin(K,p): K \in \O_2, p \in \inter K \}$. To show this, it suffices
to replace the boundary of a regular $S$-gon near the vertices by suitable elliptic arcs. This provides a $C^2$-class curve as the boundary of a convex region, which, after applying a smoothing algorithm like in \cite{DLS2},
yields a convex region $K \in \O_2$, with internal robustness ``almost'' equal to $\frac{1}{2S}$.
\end{rem}

\subsection{Conjectures and open questions}
Since we know  \cite{DomokosRuina} that the first nonempty class in the plane is  $\{2\}$,  trivially we have
\[
\rho_2=1.
\]
Based on Theorems~\ref{thm:external} and \ref{thm:internal}, we can formulate the next conjecture.
\begin{conj}\label{2D}
$\rho_n=\rho(\mbox{regular n-gon})$ if  $n>2$.
\end{conj}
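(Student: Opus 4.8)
The plan is to prove the sharper statement that $\rho(K)\le\rho(P_n)$ for every $K\in\{n\}$, where $P_n$ denotes the regular $n$-gon; since $P_n\in\{n\}$, this immediately yields $\rho_n=\rho(P_n)$. The guiding idea is to transfer the two decoupled results, Theorems~\ref{thm:external} and \ref{thm:internal}, to the coupled setting by regarding a single truncation as producing two effects at once: a change of the hull $\bd K$ (the \emph{external} effect, felt with the reference point held at the old centre $G$) and an induced displacement of the centre of gravity from $G$ to the new centre $G'$ (an \emph{internal} effect). Writing $T=K\setminus K'$ for the removed piece, with centroid $c_T$, the elementary barycentre identity
\[
G'-G=\frac{\area(T)}{\area(K')}\,(G-c_T)
\]
shows that the centre shift has magnitude at most $\frac{\area(T)}{\area(K)}\diam(K)$, that is, it is first order in the relative truncated area. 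Thus the two effects live on comparable scales, and the full problem should be governed by a \emph{joint} external--internal inequality.

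First I would pin down the target value $\rho(P_n)$. By the $n$-fold symmetry, the optimal full truncation of $P_n$ ought to be supported in a single angular sector about one vertex, exactly as in the proof of Theorem~\ref{thm:external}; one then tracks how the centre shift, which moves $G$ away from the truncated vertex, interacts with the stability of the edge midpoints. This reduces $\rho(P_n)$ to a one-parameter extremal problem of the same flavour as the convexity computation for $f(\alpha)=\frac{\tan\alpha}{\tan\alpha-\alpha}$ in Theorem~\ref{thm:external}, now corrected by the first-order centre displacement above.

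For the upper bound I would first attempt the clean comparison relating $\rho(K)$ to $\rex(K,G)$ and $\rin(K,G)$ together. Take the external-optimal truncation $K'$, which loses a stable point with respect to the fixed point $G$ and, by Theorem~\ref{thm:external}, costs relative area at most $\rex_n$; then argue that, after enlarging the truncation by a controlled amount governed by $\rin(K,G)$ (which by Theorem~\ref{thm:internal} measures how far the reference point may move before the stable count changes), the reduction persists with respect to the new centre $G'$. Corollary~\ref{cor:monotonicity} supplies exactly the stability of near-optimal external truncations needed to keep this enlargement efficient. Comparing the resulting bound with the explicit value of $\rho(P_n)$ from the previous step should close the estimate; a Blaschke-selection/compactness argument together with the rigidity statement of Remark~\ref{rem:ovalstability} would then upgrade the inequality to the sharp equality, with equality only for $P_n$.

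The main obstacle is precisely the coupling that the paper flags. At the external-optimal truncation the stable count drops exactly at the threshold, so $G$ sits on the boundary of the region $R(K',\cdot)$, and the induced shift to $G'$ may point in the direction that \emph{restores} the equilibrium just lost. Controlling the sign and size of this shift---showing that the truncation can always be chosen so that the centre moves favourably, and that whatever extra area this costs is dominated by the slack in Theorems~\ref{thm:external} and \ref{thm:internal} for non-regular $K$---is the crux, and it is here that the argument must genuinely fuse the two theorems rather than invoke them in succession. A secondary difficulty is ensuring that maximizing sequences do not elongate or develop degenerate equilibria, so that the supremum $\rho_n$ is attained on a compact family of near-round bodies on which Remark~\ref{rem:ovalstability} has force.
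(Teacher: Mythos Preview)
The statement you are attempting to prove is presented in the paper as a \emph{conjecture}, not a theorem: the authors explicitly write that ``based on Theorems~\ref{thm:external} and \ref{thm:internal}, we can formulate the next conjecture,'' and offer no proof. There is therefore no argument in the paper against which to compare your proposal; the paper regards the full (coupled) planar problem as open.

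As for the proposal itself, it is a reasonable outline of where the difficulty lies, but it is not a proof and you say as much. The decisive step---controlling the sign of the induced centre shift so that the equilibrium lost at the external threshold is not immediately restored when the reference point moves to $G'$---is identified as ``the crux'' and then left unresolved. The suggestion to ``enlarge the truncation by a controlled amount governed by $\rin(K,G)$'' is not made quantitative: you would need an inequality of the form $\rho(K)\le F\bigl(\rex(K,G),\rin(K,G)\bigr)$ with an explicit $F$ that attains its maximum precisely at the regular $n$-gon values, and nothing in Theorems~\ref{thm:external} and \ref{thm:internal} or Corollary~\ref{cor:monotonicity} supplies such a functional. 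Corollary~\ref{cor:monotonicity} in particular goes the wrong way for your purposes: it says the external robustness of the truncated body is \emph{at least} that of the original, which gives no upper control on how much further you must cut. Likewise, the appeal to Remark~\ref{rem:ovalstability} for rigidity presupposes that any maximizer has near-maximal $\rin$ \emph{and} $\rex$, but there is no a priori reason the full-robustness maximizer should nearly saturate both decoupled bounds simultaneously. In short, the proposal correctly diagnoses the coupling obstruction the paper highlights, but does not overcome it; the statement remains, as the paper treats it, a conjecture.
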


In 3 dimensions the situation is less transparent, however, there are many interesting questions.
As a modest generalization of Theorem~\ref{thm:platonic} we propose the following.

\begin{conj}\label{conj:general3D}
Theorem \ref{thm:platonic} is also valid for downward external robustness, and also for downward full robustness.
\end{conj}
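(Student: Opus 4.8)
The plan is to import into three dimensions the two-step architecture behind Theorem~\ref{thm:platonic}: first express the robustness as a minimum, taken over the faces of $P'$, of a single per-face geometric quantity, and then invoke a Dowker-type convexity inequality (Theorem~\ref{thm:Dowker}), under the combinatorial constraints on $(S,U,H)$, to conclude that the regular polyhedron maximizes that minimum. Throughout I would keep $\vol(P')=\vol(P)=1$ (respectively $\surf(P')=\surf(P)=1$) as the normalization and track a face-indexed family of ``cap volumes'', exactly as the planar argument of Theorem~\ref{thm:external} tracked the sets $X_i$ and the inradius argument of Theorem~\ref{thm:platonic} tracked the numbers $r_F$.

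For the external part I would fix $q$ and first argue, as in Theorem~\ref{thm:external}, that the cheapest truncation lowering the equilibrium count destroys a single stable point on some face $F$, and that doing so forces the removal of an entire ``outer cap'' region $X_F\subset P'$ — the points of the cone over $F$ lying farther from $q$ than the equilibria bracketing $F$ — in direct analogy with the sets $X_i$. This would give
\[
\rex(P',q)=\frac{\min\{\vol(X_F):F\in\F\}}{\vol(P')},
\]
reducing the problem to maximizing $\min_F\vol(X_F)$. Equalizing the $\vol(X_F)$ by truncating the faces with the larger caps (as in the planar proof), and replacing each local configuration by the right cone erected on $q$ over $F$ (the three-dimensional counterpart of the right kite of Lemma~\ref{lem:angle}), I expect $\vol(X_F)$ to become a strictly convex function of the inradius $r_F$ and of the solid angle subtended at $q$. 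Dowker's Theorem~\ref{thm:Dowker} applied to the faces, together with a Jensen-type balancing, should then force all inradii and all solid angles to coincide, and the equality analysis of Theorem~\ref{thm:platonic} — equal distances of $q$ from all faces and congruent vertex figures — should upgrade this to $P'$ being a regular polyhedron with $q$ as its center.

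For the full part I would start from this external bound evaluated at the center of gravity $G=o$ of the regular polyhedron and treat the displacement of $G$ under the optimal truncation as a perturbation, controlled by a monotonicity statement in the spirit of Corollary~\ref{cor:monotonicity}. The decisive structural point is that for the regular $P$ the symmetry of the competing caps makes the first-order motion of $G$ vanish, so the external computation is also first-order correct for full robustness; for a non-regular competitor the induced shift of $G$ can, if anything, only destroy an equilibrium more cheaply, so the inequality $\rho(P')\le\rho(P)$ should survive with the same extremal case.

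The main obstacle is exactly the coupling flagged in the introduction. In three dimensions a single truncation generically alters several faces, edges and vertices simultaneously, so that removing one stable point can create or destroy saddle and unstable points elsewhere, and the minimal-volume truncation need not be a single localized cap; establishing the clean formula for $\rex(P',q)$ above therefore requires a genuine combinatorial argument about which caps may be removed. Once $G$ is allowed to move, this difficulty compounds: the motion of $G$ relative to the caustic data is not localized to one face, and proving that it never benefits a non-regular $P'$ is where I expect the real work to lie — which is precisely why the statement is posed as a conjecture rather than proved as a theorem.
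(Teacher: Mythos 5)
This statement is posed in the paper as an open conjecture; the paper contains no proof of it, and your proposal is not one either---it is a research plan whose decisive steps are left as acknowledged gaps. The first genuine gap is the cap formula $\rex(P',q)=\min\{\vol(X_F):F\in\F\}/\vol(P')$. In the planar proof of Theorem~\ref{thm:external} the analogous reduction rests on the fact that stable and unstable points \emph{alternate} along the closed boundary curve, so a subset meeting every $X_i$ is forced (by the alternation of the distances $|z_i|-|s_i|$, $|s_{i+1}|-|z_i|$, \dots) to retain all $S$ stable points. In $\Re^3$ there is no such alternation: equilibria are constrained only by $S-H+U=2$, and a truncation can reduce $N$ by annihilating a stable--saddle pair locally near an edge of a face without clearing anything resembling a full cone-cap over that face. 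So the per-face minimum formula is not merely unproved, it is unclear that it is even the right quantity. The second gap is the ``Dowker plus Jensen'' balancing: Dowker's theorem (Theorem~\ref{thm:Dowker}) is a statement about areas of polygons circumscribed about a circle, and in the proof of Theorem~\ref{thm:platonic} it is applied two-dimensionally, face by face, to the inradii $r_F$. A cap volume $\vol(X_F)$ is not a function of the inradius and the solid angle alone---the shape of $F$ and the position of the equilibria bracketing it enter essentially---so no cited or known convexity theorem performs the equalization you invoke; the planar surrogate was the explicit computation $f''(\alpha)>0$ for $f(\alpha)=\tan\alpha/(\tan\alpha-\alpha)$, and its three-dimensional counterpart is exactly what would have to be discovered.

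The third gap concerns the full-robustness half of the conjecture. Your claim that for a non-regular competitor the induced motion of the center of gravity ``can, if anything, only destroy an equilibrium more cheaply'' is precisely the integral coupling between $\bd K$ and $G$ that the paper identifies as the core difficulty of the full problem, and it is asserted rather than argued. Corollary~\ref{cor:monotonicity} cannot be used to control it: that corollary is a planar statement about a \emph{fixed} reference point $p$, so it says nothing about how $G$ moves under truncation, let alone that the movement is never favorable to an irregular body. Your plan correctly identifies where the difficulties lie, and its architecture (per-face quantities, a Dowker-type balancing, then a perturbation argument for the coupled problem) is a reasonable way to attack the conjecture, but none of the three load-bearing steps is established, so the conjecture remains open after your argument exactly as it was before it.
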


A bolder~generalization of Theorem \ref{thm:platonic} refers to equilibrium classes.
We call the equilibrium classes containing platonic solids (classes $\{4,4\}, \{6,8\}, \{8,6\}, \{20,12\}$ and $\{12,20\}$) briefly  \em platonic classes\rm. An affirmative answer for Conjecture~\ref{conj:general3D}
would suggest that  in the platonic classes platonic solids have maximal downward full robustness.
This we pose as an open question:

\begin{prob}
Prove or disprove that in the platonic classes platonic solids have maximal downward full robustness.
\end{prob}

So far we addressed the robustness of nine classes in $\Re^3$: in Section~\ref{sec:full} we discussed the four with indices less than 3
(Theorem~\ref{thm:1221}), and here we formulated a conjecture related to the five platonic ones.
In addition, Theorem~\ref{thm:bistatic} and Corollary~\ref{cor:bistatic}
we addressed the $S$- and $U$-robustness of two infinite class families; here we complement the latter results with

\begin{conj}
$\rho_{2,n}=\rho_{n,2}=\rho_{n}$.
\end{conj}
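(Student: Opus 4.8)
The plan is to reduce the statement to its planar counterpart and to treat the two claimed equalities by a construction for the lower bound and a dimension-reduction argument for the (harder) upper bound. By Corollary~\ref{cor:bistatic} we already know $\rho_{2,n}=\rho^u_{2,n}$ and $\rho_{n,2}=\rho^s_{n,2}$, so the full robustness in these two classes is governed entirely by the difficulty of reducing $U$ (respectively $S$); the complementary partial robustness equals $1$ and never competes for the minimum. Hence it suffices to prove $\rho^u_{2,n}=\rho_n$ and $\rho^s_{n,2}=\rho_n$. The guiding picture is the one already visible in the proof of Theorem~\ref{thm:bistatic}: the extremal members of $\{2,n\}$ should be flat, coin-shaped bodies whose horizontal section is nearly a regular $n$-gon, while the extremal members of $\{n,2\}$ should be long prisms over a regular $n$-gon. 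In both families the only cheap truncation acts on the $n$-fold equatorial pattern of equilibria and therefore mimics a planar cut.

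For the lower bound $\rho_{2,n}\ge\rho_n$ I would use a thickening construction. Given a planar body $C\in\{n\}$ with $\rho(C)$ close to $\rho_n$, form the thin right prism $C\times[0,h]$ and smooth it slightly. For $h$ small its center of gravity sits near the midplane, the two large horizontal faces are the two stable points, the $n$ vertices of $C$ lift to $n$ unstable points, and the $n$ sides of $C$ become $n$ saddle points, so the body lies in $\{2,n\}$. Lowering $S=2$ would force the coin to become monostatic, at relative volume tending to $1$, so the full robustness of this body equals its $U$-robustness; and a truncation destroying one unstable point is, to leading order in $h$, the vertical extrusion of the corresponding planar truncation of $C$, whose relative volume tends to the relative area of that planar cut. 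Letting $h\to0$ and then taking the supremum over $C$ gives $\rho_{2,n}\ge\rho_n$, and the analogous elongated-prism construction gives $\rho_{n,2}\ge\rho_n$. The only care needed is bookkeeping of the lower-order contributions from the caps and from the shift of $G$ under truncation, all of which vanish in the limit.

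The upper bound $\rho_{2,n}\le\rho_n$ is where I expect the real difficulty. One must show that \emph{every} $K\in\{2,n\}$ admits an equilibrium-reducing truncation of relative volume at most $\rho_n$, and by the reduction above it suffices to lower $U$. The natural strategy is dimension reduction: associate to $K$ a planar body $D\in\{n\}$---say the orthogonal projection of $K$ along the short axis determined by its two stable points, or a suitable central section---whose equilibrium structure with respect to the projected reference point reproduces the equatorial $n$-fold pattern of $K$. Since $D\in\{n\}$, its cheapest equilibrium-reducing truncation has relative area at most $\rho_n$ by definition of the planar class robustness, and one then wants to lift this planar cut to a $U$-reducing truncation of $K$ of no larger relative volume. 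The obstruction is precisely the coupling stressed in the introduction: projection and section do not commute with the location of $G$, so the center of gravity of $D$ need not be the image of $G$, and a truncation of $K$ moves $G$ in a genuinely three-dimensional way. Controlling this coupling---perhaps through a monotonicity statement in the spirit of Corollary~\ref{cor:monotonicity}, or through bounding-box estimates as in Lemma~\ref{lemma:prop}---is the crux, and is the reason the statement is posed as a conjecture rather than a theorem. I note that, unlike the evaluation of $\rho_n$ itself, this equality does not require Conjecture~\ref{2D}: only the identity $\rho_{2,n}=\rho_n$, not the value of $\rho_n$, is at stake, and the $\{n,2\}$ case follows by the same scheme applied to the prismatic family.
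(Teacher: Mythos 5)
The statement you are proving is posed in the paper as a \emph{conjecture}: the paper contains no proof of it, only the heuristic remark (following Corollary~\ref{cor:bistatic}) that the solids constructed in the proof of Theorem~\ref{thm:bistatic}, which have maximal $S$- and $U$-robustness, are expected to have maximal full robustness as well. Your proposal must therefore stand on its own, and it does not: by your own admission, the upper bound $\rho_{2,n}\le\rho_n$ --- that \emph{every} $K\in\{2,n\}$ admits an equilibrium-reducing truncation of relative volume at most $\rho_n$ --- is left entirely open, with the coupling between the truncation and the center of gravity named as the unresolved obstruction. Since that bound is the actual content of the conjecture (the reduction via Corollary~\ref{cor:bistatic} and the lower-bound construction are the easy directions), what you have written is a program, not a proof.

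There is also a concrete error in the lower bound. The thin right prism $C\times[0,h]$ over a planar body $C\in\{n\}$, even after the slight smoothing of \cite{DLS2} (which is designed to preserve the equilibrium structure), is \emph{not} in class $\{2,n\}$: each of the $n$ lateral faces contains the foot of the perpendicular dropped from $G$, hence carries a stable equilibrium, so the body lies in class $\{n+2,2n\}$ (with $3n$ saddle points on the edges; indeed $(n+2)-3n+2n=2$). The bodies that approach the degenerate planar $n$-gon from within $\{2,n\}$ must instead be lens-shaped, with two strictly convexly curved caps meeting along the $n$-gonal rim, so that no lateral stable points form; the same issue affects your elongated prism, whose flat ends are stable points (again giving $S=n+2$) and must be replaced by outward-bulging caps to yield the two unstable points of $\{n,2\}$. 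This defect is repairable, and with the repaired construction the limit $\rho_{2,n}\ge\rho_n$, $\rho_{n,2}\ge\rho_n$ is plausible (it is exactly the paper's intuition), but as stated your construction produces bodies in the wrong equilibrium class, so even the direction you claim to have settled is not established.
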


The underlying geometric idea is that in classes $\{2,n\}$ and $\{n,2\}$ the very same solids 
(described in the proof of Theorem~\ref{thm:bistatic}) which have
`maximal' $S$- and $U$-robustness, also have `maximal' overall robustness.
In general, one would expect that a `uniform distribution' of equilibrium points is associated with maximal robustness. If all equilibrium points were of the same type, this would resemble the Tammes Problem
\cite{Tammes}, however, since we have different types of equilibria, this analogy is incomplete.
While our understanding of the robustness of general equilibrium classes is rather limited,
the following conjecture is strongly suggested by Corollary~\ref{cor:monotonicity} and is consistent
with all our previous findings and conjectures:

\begin{conj}
If $i\geq k$ and $j\geq l$ then $\rho_{i,j}\leq\rho_{k,l}$.
\end{conj}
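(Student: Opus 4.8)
The plan is to reduce the two-parameter monotonicity to single-coordinate steps and then chain them. Since the classes $\{S,U\}$ are nonempty for all $S,U\geq 1$, it suffices to prove the two ``unit-step'' inequalities $\rho_{i,j}\leq\rho_{i-1,j}$ (for $i\geq 2$) and $\rho_{i,j}\leq\rho_{i,j-1}$ (for $j\geq 2$); the general statement for $i\geq k$, $j\geq l$ then follows by composing finitely many such steps along a monotone lattice path from $(i,j)$ down to $(k,l)$, every intermediate class being nonempty.

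For a single step, say $\rho_{i,j}\leq\rho_{i-1,j}$, I would argue as follows. Fix $K\in\{i,j\}$ and let $K'\subset K$ be a truncation of (almost) minimal relative volume that strictly decreases the total number $N$ of equilibria, so that $\frac{\vol(K\setminus K')}{\vol(K)}$ is arbitrarily close to $\rho(K)$; as in the proof of Theorem~\ref{thm:1221}, passing to an ``almost minimal'' truncation lets us assume $K'$ has only nondegenerate equilibria. By the Poincar\'e--Hopf relation $S-H+U=2$, a minimal such cut annihilates one stable--saddle or one unstable--saddle pair, so $K'$ lies in a neighbouring class, either $\{i-1,j\}$ or $\{i,j-1\}$. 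In the first case the decisive claim is that
\[
\rho(K')\geq\rho(K),
\]
for then $\rho_{i-1,j}\geq\rho(K')\geq\rho(K)$, and taking the supremum over $K\in\{i,j\}$ yields the step; if instead $K'\in\{i,j-1\}$, the same claim proves the other unit step. Thus everything reduces to a full-robustness version of the monotonicity already established for external robustness in Corollary~\ref{cor:monotonicity}: after a minimally equilibrium-reducing cut, the body becomes at least as hard to simplify further.

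The main obstacle is precisely the coupling between $\bd K$ and $G$ flagged in the introduction. Corollary~\ref{cor:monotonicity} gives the monotonicity $\rex(K',p)\geq\rex(K,p)$ only for a \emph{fixed} reference point $p$, exploiting that in the near-extremal configuration the removable regions have equal area and that, after deleting one of them, the governing quantity is controlled by the next region through the strict convexity used in Theorem~\ref{thm:external}. For full robustness the centre of gravity $G$ moves under truncation, so one must (i) bound the displacement of $G$ in terms of the removed relative volume, and (ii) show that this displacement cannot open up a cheap new equilibrium-reducing cut. I expect step (i) to be routine from the integral definition of $G$ together with a lower volume bound of the type furnished by Lemma~\ref{lemma:prop}, while step (ii) is the genuine difficulty: it requires combining the fixed-$p$ monotonicity with a perturbation estimate showing that $\rin$ and $\rex$ vary controllably, with uniform lower bounds, as $G$ shifts.

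A complementary line of attack, consistent with the surrounding conjectures, is to bypass the bifurcation argument and instead exhibit, for each $K\in\{i,j\}$, an explicit body in $\{k,l\}$ of robustness at least $\rho(K)$, using the ``uniform distribution of equilibria'' heuristic and the extremal polytopes (regular polygons, prisms, platonic solids) expected to realise $\rho_{k,l}$. This would sidestep the centre-of-gravity tracking, but it rests on the still-open extremality conjectures stated above, and so can at best settle special cases such as the platonic and the $\{2,n\}$, $\{n,2\}$ families already analysed.
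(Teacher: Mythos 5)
This statement is a \emph{conjecture} in the paper: the authors offer no proof, only the remark that it is suggested by Corollary~\ref{cor:monotonicity} and consistent with their other results and conjectures. Your proposal does not close it either, and it has two genuine gaps. First, the reduction to unit steps is logically flawed: for a fixed $K \in \{i,j\}$, a (near-)minimal $N$-decreasing truncation $K'$ lands in \emph{one} of the classes $\{i-1,j\}$ or $\{i,j-1\}$, and you do not control which. So even granting your key claim $\rho(K') \geq \rho(K)$, taking suprema only yields $\rho_{i,j} \leq \max\{\rho_{i-1,j}, \rho_{i,j-1}\}$, not the two separate unit-step inequalities. If, say, every near-extremal body in $\{i,j\}$ is most cheaply truncated into $\{i,j-1\}$, your argument says nothing about $\rho_{i,j}$ versus $\rho_{i-1,j}$, and the lattice-path chaining collapses. (Also, the assertion that a minimal cut annihilates exactly one stable--saddle or one unstable--saddle pair is a genericity statement that would itself need proof; the Poincar\'e--Hopf relation $S-H+U=2$ alone only constrains the counts, it does not force the transition to be to a neighbouring class.)

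Second --- and you acknowledge this yourself --- the ``decisive claim'' $\rho(K') \geq \rho(K)$ is essentially the whole open problem. Corollary~\ref{cor:monotonicity} is established only for planar \emph{external} robustness with the reference point held fixed, and its proof leans on the explicit equal-area sector decomposition from Theorem~\ref{thm:external}; nothing analogous is available once the reference point is the moving center of gravity, in $3$ dimensions, under arbitrary convex truncations. Your steps (i) and (ii) --- bounding the displacement of $G$ by the removed volume, and showing that this displacement cannot open a cheaper subsequent cut --- are exactly the boundary/center-of-gravity coupling that the paper identifies as the core difficulty and deliberately sidesteps by studying the decoupled internal and external problems. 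What you have written is a sensible strategy outline that locates the right obstruction, but it is not a proof, and the paper contains none: the statement remains open.
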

In general, it is an interesting question whether $\rho_{i,j}=\rho_{j,i}$.
From the point of view of applications and numerical experiments it might be of interest to study \emph{partial robustness}, measuring the difficulty of reducing 
separately either $S$ or $U$.
For example, based on the above arguments and conjectures, we expect in classes $\{2,n\}$ the partial $S$-robustness to be much smaller then the partial $U$-robustness
and the inverse statement applies for classes $\{n,2\}$.


Another `reduced', however potentially important version of robustness
can be defined by admitting only truncations by planes (or straight
lines in 2 dimensions). This case is not only interesting because
numerical experiments appear feasible but also this is the only
truncation of a convex body where both resulting objects remain
(weakly) convex. This naturally leads to the statistical study of the
evolution of numbers of equilibria in a population of convex solids
which are generated by subsequent planar truncations and where
\emph{both} convex solids resulting from a trucation are considered.
 
This idea also suggests an alternative definition of robustness. So
far we considered minimal truncations changing the number of
equilibria, nevertheless it is also possible to regard the average
magnitude of such truncations. While it is not clear how to define a
measure directly on the space of general convex truncations, we can
obtain a possible approximation by admitting successive truncations by (hyper)planes.
The space of the latter is a special type affine Grassmannian manifold
with a natural measure $\mu$, invariant under the motions of the
embedding Euclidean space (cf. \cite{Santalo}), and can be extended in
a natural way to a measure $\mu_n$ on the space of $n$ successive
planar truncations. We call a truncation nontrivial if it intersects
$K$ and we call it neutral, if it leaves the equilibrium class
invariant. Then we can define the \emph{n-th order average robustness}
$\rho_n(K)$ associated with a convex solid $K$ as the measure of the
set of $n$, subsequent neutral truncations divided by the measure of
the set of $n$, subsequent nontrivial truncations. Such a concept may be useful in
computations and practical applications, also, it would be of interest
to see whether solids with maximal average robustness also have
maximal (downward) robustness in the original sense.
 
One possible, practical choice for visualizing
average robustness is to assume the above-mentioned
natural measure on the space of truncations and then plot the
percentage of truncations resulting in a given change $\Delta N$ of the
number of equilibrium points versus the relative volume corresponding
to the truncation. This visualization admits the representation
of an arbitrary convex solid of arbitrary dimension on the unit square.
The area on the unit square corresponding to neutral truncations
($\Delta N = 0$) could also serve as a definition of average robustness.
In Figure \ref{fig:new} we show this plot for the square: the lower, middle and upper regions
represent the truncations resulting in a polygon with $S=3$, $S=4$ and $S=5$ stable points,
corresponding to $\Delta N=-1,0,1$, respectively.

\begin{figure}[here]
\includegraphics[width=0.9\textwidth]{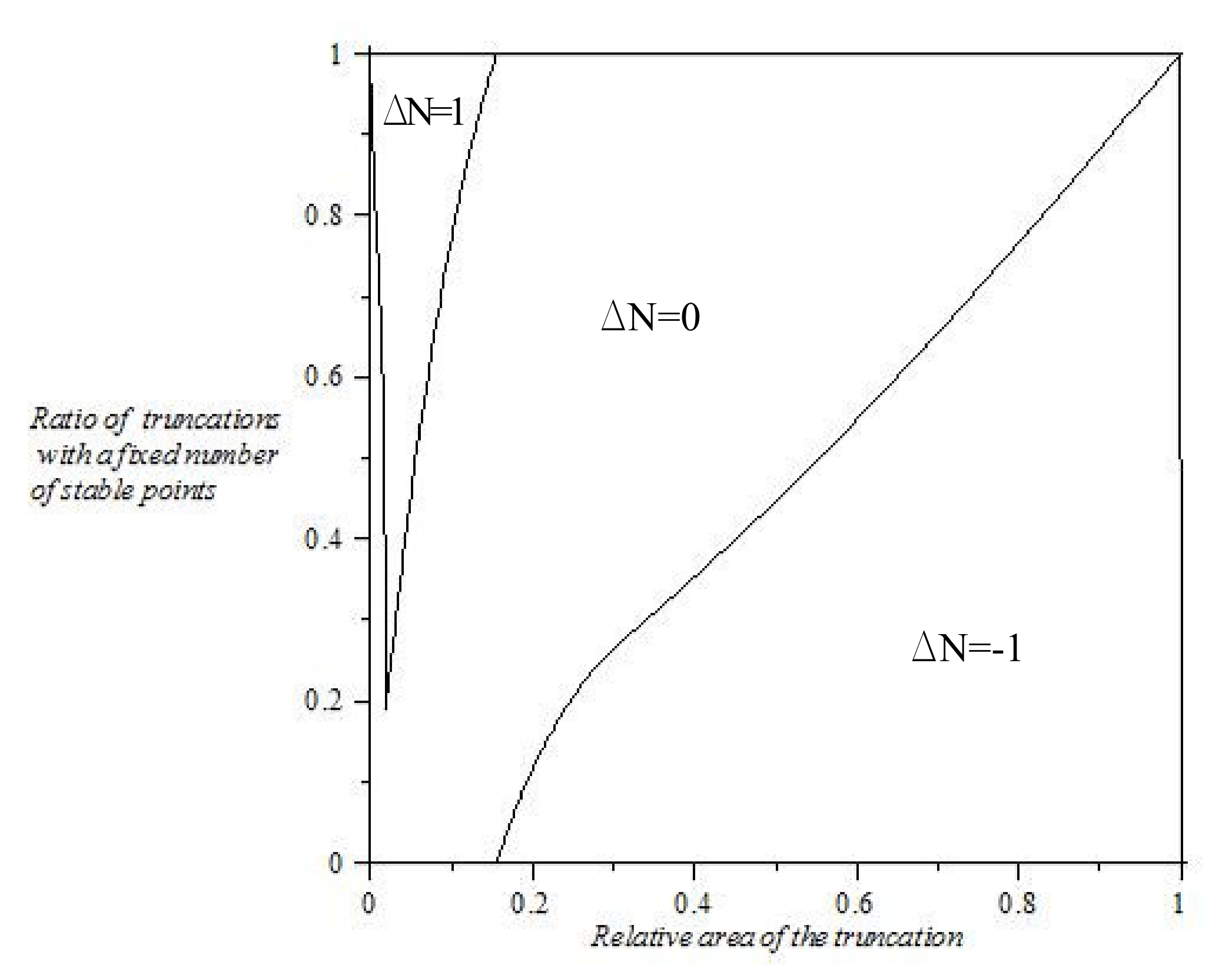}
\caption[]{Truncations of a unit square with one line.}
\label{fig:new}
\end{figure}


As pointed out in the introduction, our results may help to understand natural abrasion processes.
Theorems~\ref{thm:1221} and \ref{thm:bistatic} indicate that if we study the statistical outcome of a natural abrasion process,
the fraction of solids in classes $\{1,n\}, \{n,1\}$ would be rather low as transition from neighbour classes $\{2,n\}, \{n,2\}$ is prohibited
by the maximal $S$- and $U$-robustness of the latter. Indeed, field studies \cite{Varkonyi, DSSV} appear to confirm this: in coastal reagions
the percentage of pebbles in classes $\{1,n\}, \{n,1\}$ was found to be below 0.1\%.

\section{Acknowledgement}
The supports of the Hungarian Research Fund (OTKA) grant 104601, and of the J\'anos Bolyai Research Scholarship of the Hungarian Academy of Sciences, is gratefully acknowledged.
The authors thank M\'ark Mezei for his valuable help in computing Figure \ref{fig:new}.

\end{document}